\numberwithin{equation}{section}
\newcommand{\abs}[1]{\vert#1\vert}
\newcommand{\ave}[1]{\left\langle#1\right\rangle}
\newcommand{\cC}{\mathcal{C}}
\newcommand{\cD}{\mathcal{D}}
\newcommand{\e}{\varepsilon}
\newcommand{\cF}{\mathcal{F}}
\newcommand{\cL}{\mathcal{L}}
\newcommand{\norm}[1]{\Vert#1\Vert_\infty}
\newcommand{\rangeth}{I_\Theta}
\newcommand{\RR}{\mathbb{R}}
\newcommand{\vv}{\mathbf{v}}
\newcommand{\Var}{\operatorname{Var}}
\newcommand{\VV}{\mathbb{V}}
\newcommand{\Vx}{\mathbb{V}_x}
\newcommand{\Vy}{\mathbb{V}_y}
\newcommand{\ww}{\mathbf{w}}
\newcommand{\xx}{\mathbf{x}}
\newcommand{\yy}{\mathbf{y}}
\def\be{\begin{equation}}
\def\ee{\end{equation}}
\newtheorem{proposition}{Proposition}
\theoremstyle{remark}\newtheorem{remark}{Remark}
\begin{document}
\title{Hybrid stochastic kinetic description of two-dimensional traffic dynamics}
\author{	Michael Herty\thanks{Department of Mathematics, IGPM, RWTH Aachen University, Templergraben 55, 52062 Aachen, Germany.
			\texttt{herty@igpm.rwth-aachen.de }} \and
		Andrea Tosin\thanks{Department of Mathematical Sciences ``G. L. Lagrange'', Politecnico di Torino, Corso Duca degli Abruzzi 24, 10129 Turin, Italy.
			\texttt{andrea.tosin@polito.it}} \and
		Giuseppe Visconti\thanks{Department of Mathematics, IGPM, RWTH Aachen University, Templergraben 55, 52062 Aachen, Germany.
			\texttt{visconti@igpm.rwth-aachen.de}} \and
		Mattia Zanella\thanks{Department of Mathematical Sciences ``G. L. Lagrange'', Politecnico di Torino, Corso Duca degli Abruzzi 24, 10129 Turin, Italy.
			\texttt{mattia.zanella@polito.it}}
		}
\date{}

\maketitle

\begin{abstract}
In this work we present a two-dimensional kinetic traffic model which takes into account speed changes both when vehicles interact along the road lanes and when they change lane. Assuming that lane changes are less frequent than interactions along the same lane and considering that their mathematical description can be done up to some uncertainty in the model parameters, we derive a hybrid stochastic Fokker-Planck-Boltzmann equation in the quasi-invariant interaction limit. By means of suitable numerical methods, precisely structure preserving and direct Monte Carlo schemes, we use this equation to compute theoretical speed-density diagrams of traffic both along and across the lanes, including estimates of the data dispersion, and validate them against real data.

\medskip

\noindent{\bf Keywords:} Boltzmann and Fokker-Planck equations, uncertainty quantification, structure preserving schemes, fundamental diagrams, data dispersion \\

\noindent{\bf MSC:} 35Q20, 35Q70, 35Q84, 90B20
\end{abstract}


\section{Introduction}
\label{sec:Introduction}

In recent years the legacy of classical kinetic theory has found fruitful applications in the mathematical description of social phenomena~\cite{albi2016BOOKCH,carrillo2010MSSET,cordier2005JSP,furioli2017M3AS,pareschi2017PHYSA,toscani2006CMS}, including those, such as traffic flow of both vehicles and pedestrians, which mix mechanical and behavioural aspects of the agents~\cite{agnelli2015M3AS,degond2013KRM,delitala2007M3AS,fermo2013SIAP,festa2017KRM,herty2010KRM,klar1997JSP,puppo2016CMS}. For the sake of completeness, however, we mention that the mathematical modelling of vehicular traffic by means of methods of the kinetic theory has by now a quite long history dating back to the pioneering works~\cite{paveri1975TR,prigogine1961PROC,prigogine1971BOOK}.

The construction of mathematical models of these phenomena has to face the lack of fundamental principles and background theories: physical forces normally driving the dynamics in classical particle systems like gases and fluids are replaced by empirical interactions among the agents which often are known only statistically, cf. e.g.~\cite{ballerini2008AB}. Therefore models are in principle characterised by random inputs, such as e.g. uncertain parameters, which may greatly impact on the realism of the theoretical results with respect to the empirical observations. This is particularly true for models, such as the kinetic ones, which link the individual interactions among the agents to the collective patterns emerging from such interactions. Recent efforts in this direction exploit the Uncertainty Quantification (UQ) setting, see e.g.~\cite{albi2015MPE,dimarco2017CHAPTER,jin2018SEMASIMAI,tosin2017CMS_preprint} for an introduction. As a matter of fact, UQ methods for stochastic kinetic equations represent a fundamental step towards the actual validation of kinetic models against real data. Some approaches towards the incorporation of data in those models have been also undertaken recently, see e.g. \cite{fan2014NHM,herty2017PREPRINT}.

In this paper we propose a new kinetic traffic model, which takes into account speed changes due both to interactions among the vehicles along the road lanes and to lane changes. Although a few traffic models for lane changes are already available in the literature, cf. e.g.~\cite{illner2003CMS,moridpour2010TL}, here the novelty consists in the fact that our kinetic model allows us to study the fundamental diagrams of traffic both for the classical case of the flow of vehicles along the lanes and for the flow of vehicles across the lanes, which is instead less classical also from the empirical point of view.

In more detail, besides the acceleration and braking dynamics typical of one-dimensional traffic models along a lane, we suggest that microscopic vehicle dynamics across the lanes are simply a relaxation towards a desired lateral speed, which however is not known deterministically and hence, in our context, plays the role of the aforesaid stochastic parameter. After implementing such microscopic dynamical rules in a stochastic Boltzmann-type equation, owing to the empirical evidence that lane changes are much less frequent than one-to-one vehicle interactions along the lanes we exploit the quasi-invariant limit technique~\cite{toscani2006CMS} to finally derive a hybrid Fokker-Planck-Boltzmann equation for the the probability density of the vehicles. In this equation a nonlinear Fokker-Planck operator describes the speed variations along the lanes, whereas a Boltzmann-type collision operator takes into account the speed variations across the lanes. To the best of our knowledge this describes a novel approach to multilane traffic. It is in particular different from kinetic models where lane changing is considered as additional balance terms to a kinetic equation \cite{MR1761769}. In fact, the latter modelling does not allow one to account for the intrinsic dynamics across the lanes. 

In simplified cases, such as those of mean-field-type interactions among the vehicles, we obtain from the model analytical information on the large-time trend of the system. In particular, we are able to compute the asymptotic probability density of the cars and some of its relevant statistical moments, for instance the mean and the energy. In the general case, however, the large-time behaviour of the model is not known analytically. In order to investigate it accurately, and in particular to find the predicted fundamental diagrams of traffic, we build a suitable numerical scheme for the hybrid stochastic kinetic problem, which in particular extends second order Structure Preserving schemes for UQ available in the literature~\cite{dimarco2017CHAPTER,pareschi2017JSC} to fully nonlinear Fokker-Planck equations with non-vanishing diffusion. From  numerical simulations we observe that the average trend of our model reproduces correctly the fundamental diagrams of traffic both along and across the lanes. Moreover, the quantification of the uncertainty introduced by the stochastic parameter in the dynamics across the lanes proves to be essential in accounting at a theoretical level for the dispersion of the data around the mean normally observed in experimental fundamental diagrams.

Specifically, the rest of the paper is organised as follows. In Section~\ref{sec:MicroModel} we discuss the microscopic models of traffic dynamics along and across the road lanes, which are at the basis of our kinetic model. As usual in kinetic theory, we give them in the form of binary (i.e. one-to-one) interactions among the vehicles. In Section~\ref{sec:Boltzmann} we formulate the stochastic Boltzmann-type equation and we study, in a simplified setting, the evolution of some of its thermodynamic-like moments (mean speed and energy), which give insights into the macroscopic trends of the system. In Section~\ref{sec:hybridmodel} we derive the hybrid stochastic kinetic model and, again under suitable simplifying assumptions, we investigate its asymptotic distributions. In Section~\ref{sec:num} we build and test the numerical scheme for the hybrid problem, then we employ it to investigate, also by means of comparison with real data, the fundamental diagrams of traffic produced by the model in the general case. Finally, in Section~\ref{sec:conclusion} we summarise the main contributions of the work and briefly sketch research perspectives.

\section{Two-dimensional microscopic dynamics}
\label{sec:MicroModel}
Unlike most kinetic models of vehicular traffic available in the literature, which typically treat the flow of vehicles as one-dimensional, in this paper we consider the case of genuinely two-dimensional velocities describing the flow along the road and across the lanes. Consistently, the microscopic state of a vehicle will be the pair $\vv:=(v_x,\,v_y)$, where $v_x$ is the speed along the road ($x$-direction) and $v_y$ the lateral speed ($y$-direction). Notice that $v_x$ can be only positive, because the flow of vehicles in the longitudinal direction of a road is unidirectional, while $v_y$ can be either positive or negative, because lane changes are possible both leftwards and rightwards. Therefore we assume
$$ 0\leq v_x\leq 1, \qquad \abs{v_y}\leq \e, $$
where $ 0<\e\leq 1$ since lateral speeds are in general lower than longitudinal ones. We write
$$ \Vx:=[0,\,1], \qquad \Vy:=[-\e,\,\e] $$
for the domains of the two components of the velocity, which have to be understood as dimensionless and referred to suitable characteristic maximal values. The microscopic state space is therefore the set $\VV:=\Vx\times\Vy\subset\RR^2$.

The starting point of a kinetic model is the description of the microscopic speed transitions produced by binary interactions between any two vehicles. In our two-dimensional setting we need to design microscopic interactions both in the $x$-direction and in the $y$-direction to account for different dynamics in the two main directions of the flow. In particular, we assume that the interaction frequency \emph{across} the lanes (i.e. in the $y$-direction) is much smaller than  \emph{along}  lanes (i.e. in the $x$-direction) and, consistently, that the $x$-dynamics modify mainly the speed $v_x$ leaving $v_y$ unaltered, while the $y$-dynamics modify mainly the speed $v_y$ leaving $v_x$ unaltered.

\subsection{Microscopic rules for the $\xx$-dynamics}
\label{sec:MicroModelX}
Following~\cite{herty2010KRM,visconti2017MMS}, we assume that the post-interaction speed $v_x'$ in the $x$-direction is given by:
\begin{subequations}
\begin{equation}
	v_x'=\begin{cases}
			v_x+\alpha P(\rho)(V_A-v_x)+\sqrt{\alpha P(\rho)}D_A(v_x)\xi & \text{if } v_x<W_x \\[2mm]
			v_x+\alpha(1-P(\rho))(V_B-v_x)+\sqrt{\alpha(1-P(\rho))}D_B(v_x)\xi & \text{if } v_x>W_x, \\
		\end{cases}
	\label{eq:binary_x-v}
\end{equation}
where:
\begin{itemize}
\item $0<\alpha\leq 1$ is a constant weighting the strength of the interaction;
\item $P(\rho)\in [0,\,1]$ is the \emph{probability of accelerating} given as a function of the density $\rho$ of the vehicles, cf.~\cite{prigogine1971BOOK} and see below for a more detailed discussion;
\item $V_A$, $V_B$ are target speeds in acceleration and deceleration, respectively;
\item $\xi$ is a random variable modelling a stochastic fluctuation with zero mean and finite variance $\sigma^2>0$, and $D_A,\,D_B\geq 0$ are diffusion coefficients depending on the speed $v_x$ itself, see below.
\end{itemize}

From~\eqref{eq:binary_x-v} we see that the definition of $v_x'$ depends on the comparison between the current speed $v_x$ and a reference speed $W_x\in [0,\,1]$, that discriminates if the vehicle accelerates or brakes. Possible choices for $W_x$ are:
$$ W_x=w_x \quad\text{or}\quad W_x=u_x, $$
where $w_x$ is the $x$-component of the velocity $\ww:=(w_x,\,w_y)$ of a leading vehicle whereas $u_x$ denotes the mean speed of the flow in $x$-direction, we refer to~\cite{herty2010KRM} for an extensive discussion. If $W_x=w_x$ then we are in the case of genuine \emph{binary} interactions and we assume parallelly that
\begin{equation}
	w_x'=w_x,
	\label{eq:binary_x-w}
\end{equation}
\end{subequations}
i.e. that the $x$-speed of the leading vehicle remains unchanged after the interaction. Conversely, if $W_x=u_x$ we are in the case of the so-called \emph{mean-field} interactions, that can be regarded as an approximation of the previous ones, cf.~\cite{visconti2017MMS}.

The probability of accelerating $P=P(\rho)$ is in general a non-increasing function of the density $\rho$ of the vehicles. In more detail, assuming that $0\leq \rho\leq 1$ -- where $\rho=1$ is the dimensionless value corresponding to the maximum density that can be accommodated in a fully congested road (bumper-to-bumper traffic), one expects that $P\to 1^-$ when $\rho\to 0^+$ and that $P\to 0^+$ when $\rho\to 1^-$. The expression of $P$ that we consider here is in particular
\begin{equation}
	P(\rho):=1-\rho^\delta, \qquad \delta\geq 0.
	\label{eq:P}
\end{equation}

The target speeds $V_A$, $V_B$ describe instead the driving style of the individuals. For consistency, we require that $v_x<V_A\leq 1$ and that $0\leq V_B<v_x$. In~\cite{herty2010KRM,visconti2017MMS} several choices of $V_A$ and $V_B$ are discussed along with their influence on the structure of the resulting fundamental diagrams of traffic. In this paper we stick to the modelling of $V_A$ and $V_B$ introduced in~\cite{visconti2017MMS}, namely
\begin{equation}
	V_A:=\min\{v_x+\Delta{v},\,1\}, \qquad V_B:=P(\rho)W_x,
	\label{eq:VA.VB}
\end{equation}
where $\Delta{v}>0$ is a fixed parameter denoting the speed jump in acceleration while $W_x$ is the reference speed discussed above.

The local relevance of the stochastic fluctuation $\xi$, modelling random effects in the choice of the post-interaction speed by the drivers, is weighted by the diffusion coefficients $D_A,\,D_B$, that here we consider of the form
\begin{equation}
	\begin{array}{l}
		D_A(v_x):=\nu(v_x)(V_A-v_x)^\kappa \\[2mm]
		D_B(v_x):=\nu(v_x)(v_x-V_B)^\kappa,
	\end{array} 
	\qquad \text{with}\quad\nu(v_x):=v_x(1-v_x)\quad\text{and}\quad\kappa\geq 1,
	\label{eq:DA_DB}
\end{equation}
cf.~\cite{herty2010KRM}. In particular, the function $\nu$ makes the stocastic fluctuation vanish at the boundary of $\Vx$ (the $x$-speed domain), i.e. for $v_x=0$ and $v_x=1$.

For a general unbounded stochastic fluctuation $\xi\in\RR$ it may happen that the post-interaction speed $v_x'$ resulting from~\eqref{eq:binary_x-v} lies outside $\Vx$, implying that not all binary interactions are admissible. In order to prevent this it is sufficient to consider compactly supported stochastic fluctuations as stated in the following result.
\begin{proposition} \label{prop:xi.bounded}
If 
$$ \frac{\alpha (1-P(\rho))-1}{\sqrt{\alpha (1-P(\rho))}}\leq\abs{\xi}\leq\frac{1-\alpha P(\rho)}{\sqrt{\alpha P(\rho)}} $$
then $v_x'\in\Vx$ for all $v_x\in\Vx$.
\end{proposition}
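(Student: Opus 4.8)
The plan is to exploit the fact that, in each of the two branches of~\eqref{eq:binary_x-v}, the post-interaction speed $v_x'$ is an \emph{affine} function of the fluctuation $\xi$. Consequently, as $\xi$ ranges over a symmetric interval, $v_x'$ ranges over an interval whose endpoints are attained at the extreme values of $\abs{\xi}$; moreover the coefficient of $\xi$ is non-negative (it is a product of $\sqrt{\alpha P(\rho)}$ or $\sqrt{\alpha(1-P(\rho))}$ with $\nu(v_x)\geq0$), so $v_x'$ is increasing in $\xi$: the ceiling $v_x'\leq1$ is threatened by large positive $\xi$ and the floor $v_x'\geq0$ by large negative $\xi$. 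Writing for brevity $p:=\alpha P(\rho)$ and $q:=\alpha(1-P(\rho))$, both of which lie in $[0,\,1]$ since $0<\alpha\leq1$ and $P(\rho)\in[0,\,1]$, I would split the analysis into the acceleration branch ($v_x<W_x$) and the deceleration branch ($v_x>W_x$) and impose in each the ceiling and the floor constraint, obtaining four inequalities to be verified uniformly for every $v_x\in\Vx$.

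First I would insert the factored diffusion coefficients~\eqref{eq:DA_DB} and set $a:=V_A-v_x$ and $b:=v_x-V_B$, so that $D_A=\nu(v_x)a^\kappa$ and $D_B=\nu(v_x)b^\kappa$ with $\nu(v_x)=v_x(1-v_x)$. The admissibility requirements $v_x<V_A\leq1$ and $0\leq V_B<v_x$ translate into the two-sided bounds $0<a\leq1-v_x$ and $0<b\leq v_x$, while $\kappa\geq1$ together with $a,\,b\in(0,\,1]$ yields the elementary but decisive estimates $a^\kappa\leq a$, $b^\kappa\leq b$, and $\nu(v_x)\leq1$. These are the tools that let one dominate the stochastic term by the deterministic drift.

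With $\abs{\xi}\leq c$ the acceleration branch reads $v_x'=v_x+pa+\sqrt{p}\,\nu(v_x)a^\kappa\xi$, the worst case for the ceiling being $\xi=+c$. Choosing $c=\tfrac{1-p}{\sqrt{p}}$ the noise term becomes $(1-p)\nu(v_x)a^\kappa$, and I would close the estimate by the telescoping chain
\[
(1-p)v_x(1-v_x)a^\kappa\leq(1-p)v_x(1-v_x)a\leq(1-p)(1-v_x)\leq(1-v_x)-pa,
\]
using successively $a^\kappa\leq a$, then $v_xa\leq1$, then $a\leq1-v_x$; this is precisely $v_x'\leq1$ and it produces the upper bound $\abs{\xi}\leq\tfrac{1-\alpha P(\rho)}{\sqrt{\alpha P(\rho)}}$. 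Symmetrically, in the deceleration branch $v_x'=v_x-qb+\sqrt{q}\,\nu(v_x)b^\kappa\xi$ the worst case for the floor is $\xi=-c$ with $c=\tfrac{1-q}{\sqrt{q}}$, and a parallel chain resting on $b^\kappa\leq b$, $b\leq v_x$, and the identity $q+(1-q)(1-v_x)=1-(1-q)v_x\leq1$ gives $v_x'\geq0$, hence the lower bound $\tfrac{\alpha(1-P(\rho))-1}{\sqrt{\alpha(1-P(\rho))}}\leq\abs{\xi}$. Finally I would dispatch the two remaining constraints -- the floor in the acceleration branch and the ceiling in the deceleration branch -- which are non-binding because there the drift pushes $v_x'$ away from the boundary in question, so the same estimates close them with room to spare.

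The step I expect to be the genuine obstacle is making these bounds \emph{uniform in} $v_x$: each inequality is really a statement in two coupled variables, since $a$ and $b$ are themselves functions of $v_x$ through~\eqref{eq:VA.VB}. The art is to collapse this to a single clean threshold by the repeated, in-the-right-order use of $a\leq1-v_x$, $b\leq v_x$, and $a^\kappa\leq a$, rather than attempting a direct optimisation over $v_x$. One should also note that near the endpoints $v_x=0$ and $v_x=1$ the factor $\nu(v_x)$ vanishes, so the noise term disappears and no constraint on $\xi$ is actually generated there; the binding conditions come entirely from the interior, which is exactly why the clean thresholds of the statement are the right sufficient ones.
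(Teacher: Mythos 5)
Your proposal is correct and follows essentially the same route as the paper's proof: both reduce to the extreme values of $\xi$, dominate the diffusion term via $\nu(v_x)\leq\min\{v_x,\,1-v_x\}\leq 1$ and $(V_A-v_x)^\kappa\leq V_A-v_x$ (resp. $(v_x-V_B)^\kappa\leq v_x-V_B$), and check the four boundary constraints branch by branch. The only cosmetic difference is that the paper packages the ceiling estimate as a convex combination of $v_x$ and $V_A$ whereas you write out an explicit chain of inequalities; the underlying estimates coincide.
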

\begin{proof}
Let us consider the case $v_x<W_x$ in~\eqref{eq:binary_x-v}. Since $0\leq V_A-v_x\leq 1$, $0\leq \nu(v_x)\leq 1$ and $\kappa\geq 1$ we have $D_A(v_x)\xi\leq(V_A-v_x)\abs{\xi}$, whence
$$ v_x'\leq\left(1-\alpha P(\rho)-\sqrt{\alpha P(\rho)}\abs{\xi}\right)v_x
	+\left(\alpha P(\rho)+\sqrt{\alpha P(\rho)}\abs{\xi}\right)V_A. $$
If $\alpha P(\rho)+\sqrt{\alpha P(\rho)}\abs{\xi}\leq 1$, i.e.
\begin{equation}
	\abs{\xi}\leq\frac{1-\alpha P(\rho)}{\sqrt{\alpha P(\rho)}},
	\label{eq:condition_xi}
\end{equation}
the right-hand side is a convex combination of $v_x$, $V_A$. This implies $v_x'\leq\max\{v_x,\,V_A\}=V_A\leq 1$.

On the other hand, since $\nu(v_x)\leq v_x$ we also have $D_A(v_x)\xi\geq -v_x\abs{\xi}$ and therefore
$$ v_x'\geq\left(1-\alpha P(\rho)-\sqrt{\alpha P(\rho)}\abs{\xi}\right)v_x+\alpha P(\rho)V_A, $$
which under~\eqref{eq:condition_xi} produces $v_x'\geq\alpha P(\rho)V_A\geq 0$.

Summarising, condition~\eqref{eq:condition_xi} guarantees that $v_x'\in\Vx$ for all $v_x<W_x$.

For the case $v_x>W_x$ in~\eqref{eq:binary_x-v} we proceed similarly using the fact that $\nu(v_x)\leq 1-v_x$, thereby deducing also the lower bound on $\abs{\xi}$.
\end{proof}

\subsection{Microscopic rules for the $\yy$-dynamics}
\label{sec:MicroModelY}
We model lane changing as a continuous process in an additional spatial dimension. This dimension is orthogonal to the driving direction and denoted by $y$. When vehicles move across the lanes (in $y$-direction) we consider the following dynamics:
\begin{equation}
	v_y'=v_y+\beta(u_x)(v_d(\theta)-v_y).
	\label{eq:binary_y}
\end{equation}
Notice that~\eqref{eq:binary_y} accounts neither for binary nor for mean-field interactions. Since lane changes are much less frequent than interactions along the main stream of traffic, the rule~\eqref{eq:binary_y} simply assumes that the lateral speed of the vehicles relaxes towards a \emph{desired speed} $v_d\in\Vy$, which will be presumably close to zero. However, in order to add realism to the very basic dynamics~\eqref{eq:binary_y}, we refrain from fixing deterministically the value of $v_d$ and assume instead that it depends on a random parameter $\theta\in\rangeth\subseteq\RR$.
 We will come back more precisely to this aspect in the next sections.

The term $\beta(u_x)$ in~\eqref{eq:binary_y} models the relaxation rate towards $v_d$. Specifically, it depends on $u_x$, which is the mean speed in the $x$-direction, so that the post-interaction speed $v_y'$ across the lanes is affected by the traffic flow along the lanes. Thinking of $v_d$ close on average to zero, a conceivable choice is
$$ \beta(u_x)\propto u_x, $$
meaning that the faster the flow along the lanes the faster the relaxation towards $v_d$, namely towards zero, across the lanes, consistently with the intuition that lane changes are not necessary if the traffic is sufficiently fluent in the driving directions.

Similarly to the $x$-dynamics discussed in Section~\ref{sec:MicroModelX}, also for~\eqref{eq:binary_y} we need to ensure that $v_y'\in\Vy$ for all $v_y\in\Vy$. The following result holds.
\begin{proposition} \label{th:prop2}
If $\beta:[0,\,1]\to [0,\,1]$ then $v_y'\in\Vy$ for all $v_y\in\Vy$.
\end{proposition}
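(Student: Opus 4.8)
The plan is to recognise that the update rule~\eqref{eq:binary_y} expresses $v_y'$ as a convex combination of the current lateral speed $v_y$ and the desired speed $v_d(\theta)$, and then to invoke the stability of an interval under convex combinations. Concretely, I would first rewrite~\eqref{eq:binary_y} in the equivalent form
$$ v_y'=(1-\beta(u_x))v_y+\beta(u_x)v_d(\theta). $$

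Next I would verify that the hypothesis on $\beta$ can actually be applied, i.e. that its argument $u_x$ lies in the domain $[0,\,1]$. Since $u_x$ is the mean speed in the $x$-direction, it is an average of quantities in $\Vx=[0,\,1]$, hence $u_x\in[0,\,1]$; the assumption $\beta:[0,\,1]\to[0,\,1]$ then yields $\beta(u_x)\in[0,\,1]$. Consequently the two coefficients $1-\beta(u_x)$ and $\beta(u_x)$ are non-negative and sum to one, so $v_y'$ is a genuine convex combination of $v_y$ and $v_d(\theta)$. Both of these belong to $\Vy=[-\e,\,\e]$ — the former by the assumption $v_y\in\Vy$, the latter because $v_d\in\Vy$ by construction. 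As $\Vy$ is an interval, hence convex, any convex combination of its elements stays in $\Vy$, whence $v_y'\in\Vy$, which is the claim.

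I do not anticipate a genuine obstacle here. Unlike the $x$-dynamics of Proposition~\ref{prop:xi.bounded}, there is no stochastic diffusion term to control, so no compactness constraint on a random fluctuation is needed and the bounds follow purely from convexity. The only point meriting a word of care is checking that $u_x$ falls in the domain $[0,\,1]$ of $\beta$, so that the hypothesis on $\beta$ is legitimately invoked; once this is noted, the convexity argument closes the proof immediately.
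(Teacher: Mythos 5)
Your proposal is correct and follows essentially the same route as the paper: rewrite~\eqref{eq:binary_y} as the convex combination $v_y'=(1-\beta(u_x))v_y+\beta(u_x)v_d(\theta)$ of $v_y$ and $v_d(\theta)\in\Vy$ and conclude by convexity of the interval $\Vy$. Your additional check that $u_x\in[0,\,1]$ lies in the domain of $\beta$ is a harmless (and sensible) extra detail that the paper leaves implicit.
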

\begin{proof}
By rewriting~\eqref{eq:binary_y} as $v_y'=(1-\beta(u_x))v_y+\beta(u_x)v_d(\theta)$ we see that, under the assumption $0\leq\beta(u_x)\leq 1$, the post-interaction speed $v_y'$ is a convex combination of $v_y$, $v_d(\theta)\in\Vy$. Hence the thesis easily follows from the convexity of $\Vy$.
\end{proof}

\section{Stochastic Boltzmann-type description}
\label{sec:Boltzmann}
The interaction rules~\eqref{eq:binary_x-v}-\eqref{eq:binary_x-w},~\eqref{eq:binary_y} can be encoded in a kinetic Boltzmann-type description of the dynamics. This is particularly useful to study the asymptotic macroscopic trends of the system, possibly taking advantage of suitable scaling and limit procedures.

For a certain realisation $\theta\in\rangeth$ of the random parameter appearing in~\eqref{eq:binary_y}, let $f=f(\vv,\,t;\,\theta):\VV\times [0,\,+\infty)\to\RR_+$ be the kinetic distribution function such that $f(\vv,\,t;\,\theta)\,d\vv$ is the fraction of vehicles which at time $t\geq 0$ have a microscopic speed in an infinitesimal volume of the state space $\VV$ centred at $\vv$. Since $\theta$ is a constant parameter in each $y$-interaction, whose precise value is however unknown, we proceed along the lines of the so-called \emph{Uncertainty Quantification} (UQ): we first consider the family of all possible dynamics of the system for $\theta\in\rangeth$, which amounts to regarding $f$ as parametrised by $\theta$; next we average their outputs according to the probability distribution of $\theta$, say $h=h(\theta):\rangeth\to\RR_+$ such that $\int_{\rangeth}h(\theta)\,d\theta=1$. We refer to~\cite{tosin2017CMS_preprint} for more details.

Under the interaction schemes set forth in Section~\ref{sec:MicroModel}, the time evolution of $f$ is given by the following Boltzmann-type kinetic equation in weak form:
\begin{align}
	\begin{aligned}[b]
		\frac{d}{dt}\int_\VV\varphi(\vv)f(\vv,\,t;\,\theta)\,d\vv &=
			\frac{\rho}{2}\ave{\int_\VV\int_\VV\left(\varphi(\vv'_x)-\varphi(\vv)\right)f(\vv,\,t;\,\theta)f(\ww,\,t;\,\theta)\,d\vv\,d\ww} \\
		&\phantom{=} +\gamma\rho\int_\VV\left(\varphi(\vv_y')-\varphi(\vv)\right)f(\vv,\,t;\,\theta)\,d\vv,
	\end{aligned}
	\label{eq:boltzmann}
\end{align}
where
\begin{itemize}
\item $\varphi:\VV\to\RR$ is a test function, i.e. any observable function of the microscopic state $\vv$;
\item the first term at the right-hand side accounts for the interactions in the $x$-direction which leave the speed $v_y$ unaltered; in particular, $\vv'_x:=(v'_x,\,v_y)$ with $v'_x$ given by~\eqref{eq:binary_x-v}. The coefficient $\rho/2$ is the interaction rate, which is supposed to be proportional to the density of the vehicles and, in particular, takes into account the asymmetric form of the interactions~\eqref{eq:binary_x-v}-\eqref{eq:binary_x-w}, cf.~\cite{tosin2017IFAC_preprint};
\item $\ave{\cdot}$ denotes the expectation with respect to the stochastic fluctuation $\xi$, cf.~\eqref{eq:binary_x-v};
\item the second term at the right-hand side accounts for speed changes in the $y$-direction which leave the speed $v_x$ unaltered; in particular, $\vv'_y:=(v_x,\,v'_y)$ with $v'_y$ given by~\eqref{eq:binary_y}. The coefficient $\gamma\rho$ is the interaction rate, with $0<\gamma\ll 1$ modelling the much lower frequency of the interactions across the lanes with respect to those along the lanes.
\end{itemize}

It is worth pointing out that~\eqref{eq:boltzmann} is a \emph{stochastic} Boltzmann-type equation, because it is parametrised by the random parameter $\theta$. From the knowledge of the kinetic distribution function $f$ one can compute $\theta$-expected quantities, such as the expected distribution function and its $\theta$-variance:
\begin{equation}
	\bar{f}(\vv,\,t):=\int_{\rangeth}f(\vv,\,t;\,\theta)h(\theta)\,d\theta, \qquad
		\Var_\theta(f)(\vv,\,t):=\int_{\rangeth}f^2(\vv,\,t;\,\theta)h(\theta)\,d\theta-{\bar{f}}^2(\vv,\,t).
	\label{eq:fbar.var}
\end{equation}
Similarly, from the thermodynamic-like moments of $f$ parametrised by $\theta$:
$$ M_\varphi(t;\,\theta):=\int_\VV\varphi(\vv)f(\vv,\,t;\,\theta)\,d\vv $$
one can recover the average  expected $\vv$-moments and their $\theta$-variance:
\begin{align*}
	& \bar{M}_\varphi(t):=\int_{\rangeth}M_\varphi(t;\,\theta)h(\theta)\,d\theta=\int_\VV\varphi(\vv)\bar{f}(\vv,\,t)\,d\vv, \\
	& \Var_\theta(M_\varphi)(t):=\int_{\rangeth}M^2(t;\,\theta)h(\theta)\,d\theta-{\bar{M}}^2_\varphi(t),
\end{align*}
which are useful tools for quantifying the uncertainty induced in the system dynamics by the random parameter $\theta$. Notice that from~\eqref{eq:boltzmann} it is in general not possible to derive a closed equation for $\bar{f}(\vv,\,t)$ by simply integrating both sides with respect to $h(\theta)\,d\theta$.

\subsection{Evolution of the macroscopic quantities}
\label{eq:macroevolution}
First of all, from~\eqref{eq:boltzmann} with $\varphi(\vv)=1$ we obtain that the integral of $f$ with respect to $\vv$ is conserved in time for all $\theta\in\rangeth$. Hence, if $f(\cdot,\,t;\,\theta)$ is chosen to be a probability density at $t=0$ it will be so for all $t>0$. The physical counterpart of this fact is the conservation of the mass of vehicles, whose density is fixed by the parameter $\rho\in [0,\,1]$ appearing in~\eqref{eq:binary_x-v} and~\eqref{eq:boltzmann}.

Let us now consider any $p$-th order moment, $p\in\mathbb{N}$, of $f$ in the $x$-direction, which amounts to taking $\varphi(\vv)=v_x^p$. Plugging into~\eqref{eq:boltzmann} we get
\begin{equation}
	\frac{d}{dt}\int_{\VV}v_x^pf(\vv,\,t;\,\theta)\,d\vv=
		\frac{\rho}{2}\ave{\int_\VV\int_\VV\left((v_x')^p-v_x^p\right)f(\vv,\,t;\,\theta)f(\ww,\,t;\,\theta)\,d\vv\,d\ww},
	\label{eq:moments-x}
\end{equation}
because $\varphi(\vv_y')-\varphi(\vv)=v_x^p-v_x^p=0$. Similarly, if we consider any $p$-th order moment of $f$ in the $y$-direction, i.e. if we take $\varphi(\vv)=v_y^p$, we discover
\begin{equation}
	\frac{d}{dt}\int_{\VV}v_y^pf(\vv,\,t;\,\theta)\,d\vv=
		\gamma\rho\int_\VV\left((v_y')^p-v_y^p\right)f(\vv,\,t;\,\theta)\,d\vv,
	\label{eq:moments-y}
\end{equation}
because now $\varphi(\vv_x')-\varphi(\vv)=v_y^p-v_y^p=0$.

This argument implies that the evolution of the macroscopic quantities in the single directions of the traffic flow may be obtained from~\eqref{eq:boltzmann} by considering separately the two collision operators at the right-hand side. Notice, however, that it is in general not possible to reconstruct the kinetic distribution function $f(\cdot,\,t;\,\theta)$ on the whole space $\VV$ of the microscopic states by taking in~\eqref{eq:boltzmann} test functions which depend on only one of the two speeds, namely by looking at the dynamics in only one direction.

\subsubsection{Macroscopic $\xx$-dynamics}
\label{sec:macro_x}
We now investigate in more detail the evolution equations of some macroscopic quantities in the $x$-direction. Precisely, we consider~\eqref{eq:binary_x-v}-\eqref{eq:binary_x-w} and~\eqref{eq:moments-x} in the simplified setting
$$ V_A=u_x, \quad V_B=u_x, \quad W_x=u_x, $$
which makes possible some explicit analytical computations.

The time evolution of the $x$-mean speed $u_x=u_x(t;\,\theta)$ results from~\eqref{eq:moments-x} with $p=1$. In particular, recalling that the stochastic fluctuation $\xi$ is a centred random variable, we obtain
\begin{align*}
	\frac{du_x}{dt} &= \frac{\alpha\rho}{2}\left(P(\rho)\int_{-\e}^\e\int_0^{u_x}(u_x-v_x)f(\vv,\,t;\,\theta)\,dv_x\,dv_y\right. \\
	&\phantom{=} \left.+(1-P(\rho))\int_{-\e}^\e\int_{u_x}^1(u_x-v_x)f(\vv,\,t;\,\theta)\,dv_x\,dv_y\right).
\end{align*}
Now, observing that
\begin{align*}
	& \int_{-\e}^\e\int_0^{u_x}(u_x-v_x)f(\vv,\,t;\,\theta)\,dv_x\,dv_y+\int_{-\e}^\e\int_{u_x}^1(u_x-v_x)f(\vv,\,t;\,\theta)\,dv_x\,dv_y \\
	&= \int_{\VV}(u_x-v_x)f(\vv,\,t;\,\theta)\,d\vv=0,
\end{align*}
we get
\begin{equation*}
	\frac{du_x}{dt}=
	\begin{cases}
		\dfrac{\alpha\rho}{2}(2P(\rho)-1)\displaystyle\int_{-\e}^\e\int_0^{u_x}(u_x-v_x)f(\vv,\,t;\,\theta)\,dv_x\,dv_y \\[3mm]
		\dfrac{\alpha\rho}{2}(1-2P(\rho))\displaystyle\int_{-\e}^\e\int_{u_x}^1(u_x-v_x)f(\vv,\,t;\,\theta)\,dv_x\,dv_y,
	\end{cases}
\end{equation*}
whence finally, summing the two equations,
$$ \frac{du_x}{dt}=\frac{\alpha\rho}{4}(2P(\rho)-1)\int_{\VV}\abs{u_x-v_x}f(\vv,\,t;\,\theta)\,d\vv. $$

By defining the marginal distribution $f_x(v_x,\,t;\,\theta):=\int_{-\e}^\e f(\vv,\,t;\,\theta)\,dv_y$, we notice that at the right-hand side it results $\int_{\VV}\abs{u_x-v_x}f(\vv,\,t;\,\theta)\,d\vv=0$ if and only if $f_x(v_x,\,t;\,\theta)=\delta_{u_x}(v_x)$. Therefore, if $\rho(2P(\rho)-1)\ne 0$, the only steady state in the $x$-direction, which allows for a stationary mean speed, is the \emph{synchronised traffic} with all the vehicles travelling at the same speed~\cite{kerner2002MCM}. Conversely, for $f_x(v_x,\,t;\,\theta)\ne\delta_{u_x}(v_x)$ (and $\rho\ne 0$) the mean speed either increases or decreases in time depending on the sign of $2P(\rho)-1$. In particular, it increases for $P(\rho)>\frac{1}{2}$, which defines the so-called \emph{free phase} of traffic when the vehicle density is small (recall that the mapping $\rho\mapsto P(\rho)$ is non-increasing); whereas it decreases for $P(\rho)<\frac{1}{2}$, which defines the so-called \emph{congested phase} of traffic when the vehicle density is large. The value $\rho=\rho_c$ such that $P(\rho_c)=\frac{1}{2}$ is called the \emph{critical density}. For the function~\eqref{eq:P} it results, for instance, $\rho_c=(1/2)^{1/\delta}$, which is consistent with the values found in~\cite{puppo2016CMS,puppo2017CMS,puppo2017KRM} for different kinetic models of traffic flow.

In order to further explore the macroscopic trends of the model it is useful to investigate also the evolution of the energy along the lanes, say $E_x=E_x(t;\,\theta)$, namely the second order $x$-moment of $f$ obtained by taking $p=2$ in~\eqref{eq:moments-x}. Since the complete equation for $E_x$ is quite complicated, we conveniently resort to a particular limit procedure, called the \emph{quasi-invariant interaction limit}~\cite{toscani2006CMS}, which allows us to grasp the essential time-asymptotic behaviour of $E_x$. Specifically, in~\eqref{eq:binary_x-v} we consider the regime of weak but frequent interactions. This corresponds to taking $\alpha$ small (notice that $\alpha$ tunes both the strength of the speed variation and the variance of the stochastic fluctuation) and to simultaneously scaling the time as $\tau:=\alpha t$. In practice, we pass from the characteristic $t$-scale of single microscopic interactions to a larger time scale defined by the variable $\tau$. Introducing the scaled kinetic distribution function $g(\vv,\,\tau;\,\theta):=f(\vv,\,\tau/\alpha;\,\theta)$ and noticing that $\partial_\tau g=\frac{1}{\alpha}\partial_t f$ we obtain from~\eqref{eq:moments-x} with $p=2$ the equation
\begin{align*}
	\frac{dE_x}{d\tau} &=
		\frac{\rho}{2\alpha}\ave{\int_\VV\int_\VV\left((v_x')^2-v_x^2\right)g(\vv,\,\tau;\,\theta)g(\ww,\,\tau;\,\theta)\,d\vv\,d\ww},
\intertext{whence, using~\eqref{eq:binary_x-v} together with $\ave{\xi}=0$, $\ave{\xi^2}=\sigma^2$ and letting $\alpha\to0^+$,}
	&= \frac{\sigma^2\rho}{2}\left(P(\rho)\int_{-\e}^\e\int_0^{u_x}D_A^2(v_x)g(\vv,\,\tau;\,\theta)\,dv_x\,dv_y\right. \\
	&\phantom{=} \left.+(1-P(\rho))\int_{-\e}^\e\int_{u_x}^1D_B^2(v_x)g(\vv,\,\tau;\,\theta)\,dv_x\,dv_y\right) \\
	&\phantom{=} +\frac{\rho}{2}(u_x^2-E_x)-\frac{\rho}{2}(1-2P(\rho))\int_\VV v_x\abs{u_x-v_x}g(\vv,\,\tau;\,\theta)\,d\vv.
\end{align*}
In particular, in the absence of stochastic fluctuation ($\sigma^2=0$) this equation specialises as
$$ \frac{dE_x}{d\tau}=\frac{\rho}{2}(u_x^2-E_x)-\frac{\rho}{2}(1-2P(\rho))\int_\VV v_x\abs{u_x-v_x}g(\vv,\,\tau;\,\theta)\,d\vv. $$

The term $u_x^2-E_x$ at the right-hand side is the opposite of the variance of the microscopic speeds in the $x$-direction, therefore it is non-positive. Moreover, for $P(\rho)\leq\frac{1}{2}$, namely in the congested phase of traffic, also the second term at the right-hand side is non-positive, which makes the energy on the whole non-increasing in time. Conversely, for $P(\rho)>\frac{1}{2}$, namely in the free phase of traffic, the second term at the right-hand side is non-negative, thus in principle the energy may not be monotonic in this case. This implies that the convergence to the steady state $E_x\to u_x^2$, consistent with the asymptotic state of synchronised traffic discussed before, is in general smoother in the congested than in the free phase of traffic.

Finally, we stress that also in the case $\sigma^2>0$ the full equation of $E_x$ gives an asymptotic trend of the energy consistent with the synchronised traffic (i.e. $E_x\to u_x^2$) thanks to the fact that with the definition~\eqref{eq:DA_DB} it results $D_A(u_x)=D_B(u_x)=0$.

\subsubsection{Macroscopic $\yy$-dynamics}
\label{sec:macro_y}
We now study the evolution of the mean speed $u_y=u_y(t;\,\theta)$ and energy $E_y=E_y(t;\,\theta)$ of traffic in the $y$-direction taking advantage of~\eqref{eq:moments-y} complemented with the microscopic dynamics~\eqref{eq:binary_y}.

For $p=1$ we get
$$ \frac{du_y}{dt}=\gamma\rho\beta(u_x)(v_d(\theta)-u_y), $$
therefore asymptotically ($\frac{du_y}{dt}\to 0$) it results $u_y\to v_d(\theta)$, consistently with microscopic relaxation dynamics towards the desired speed $v_d$.

To investigate the asymptotic trend of the energy $E_y$ it is convenient to resort also in this case to the quasi-invariant interaction limit. For this, we assume for instance $\beta(u_x)=\beta_0 u_x$, $0<\beta_0\leq 1$, and we consider the regime of small $\beta_0$. By scaling the time as $\tau:=\beta_0 t$ and the distribution function as $g(\vv,\,\tau;\,\theta):=f(\vv,\,\tau/\beta_0;\,\theta)$ we obtain from~\eqref{eq:moments-y}
$$ \frac{d}{d\tau}\int_{\VV}v_y^pg(\vv,\,\tau;\,\theta)\,d\vv=
	\frac{\gamma\rho}{\beta_0}\int_{\VV}\left((v_y')^p-v_y^p\right)g(\vv,\,\tau;\,\theta)\,d\vv, $$
whence, for $p=2$ and using~\eqref{eq:binary_y} in the limit $\beta_0\to 0^+$,
$$ \frac{dE_y}{d\tau}=2\gamma\rho u_x(v_d(\theta)u_y-E_y), $$
which asymptotically ($\frac{dE_y}{d\tau}\to 0$) produces $E_y\to v_d(\theta)u_y\to v_d^2(\theta)$. This implies that the speed variance in the $y$-direction tends to zero, namely that $f_y(v_y,\,t;\,\theta)\to\delta_{v_d(\theta)}(v_y)$, where $f_y$ denotes the marginal distribution $f_y(v_y,\,t;\,\theta):=\int_0^1f(\vv,\,t;\,\theta)\,dv_x$.

\section{Hybrid kinetic model}
\label{sec:hybridmodel}
Considering again the full Boltzmann-type equation~\eqref{eq:boltzmann} with general terms $V_A$, $V_B$, $W_x$ in~\eqref{eq:binary_x-v}, we now use the quasi-invariant interaction limit introduced in Section~\ref{sec:macro_x} to derive a hybrid kinetic model under the assumption of different interaction frequency among the vehicles along and across the lanes, cf. Section~\ref{sec:MicroModel}. The advantage of the resulting model is that it is simpler than~\eqref{eq:boltzmann} but still preserving the original asymptotic dynamics and steady states at both the kinetic and the macroscopic levels. In more detail, the nonlinear integral collision operator in the $x$-direction (first term at the right-hand side of~\eqref{eq:boltzmann}) is replaced by a \emph{Fokker-Planck-type transport-diffusion differential operator} which describes the \emph{mean-field} effect of the frequent interactions among the vehicles along the lanes. Parallelly, the linear collision operator in the $y$-direction (second term at the right-hand side of~\eqref{eq:boltzmann}) remains to describe the rare interactions among the vehicles across the road lanes.

As already mentioned, the quasi-invariant interaction limit has been introduced in~\cite{toscani2006CMS}, see also~\cite{pareschi2013BOOK}, as an asymptotic procedure reminiscent of the \emph{grazing collision limit} in classical kinetic theory~\cite{desvillettes1992TTSP,diperna1988CMP,pareschi2003NM,villani1999M2NA}. Since then it has been widely used in the literature to study the large-time trends of e.g. traffic flow models~\cite{herty2010KRM,visconti2017MMS}, crowd dynamics models~\cite{festa2017KRM}, opinion formation models~\cite{albi2016BOOKCH}, socio-economic models~\cite{cordier2005JSP,furioli2017M3AS}.

\subsection{The Fokker-Planck-Boltzmann model}
\label{sec:FP-Boltz}
The regime that we want to study is characterised by a small value of the parameter $\alpha$ in~\eqref{eq:binary_x-v}, corresponding to weak interactions in the $x$-direction, and by a simultaneously small value of the parameter $\gamma$ in~\eqref{eq:boltzmann}, corresponding to rare interactions in the $y$-direction with respect to those in the $x$-direction. As before, we introduce the time scale $\tau:=\alpha t$, where the frequency of the $x$-binary interactions raises to $O(1/\alpha)$, and we scale the distribution function as $g(\vv,\,\tau;\,\theta):=f(\vv,\,\tau/\alpha;\,\theta)$. Notice that for $\alpha$ small we have $t=\tau/\alpha$ large, hence the limit $\alpha\to 0^+$ describes the asymptotic trend of $f$. On the other hand, in view of the previous definition, the asymptotic trend of $f$ is well approximated by that of $g$.

Since $\partial_\tau g=\frac{1}{\alpha}\partial_tf$, from~\eqref{eq:boltzmann} we get
\begin{align}
	\begin{aligned}[b]
		\frac{d}{d\tau}\int_\VV\varphi(\vv)g(\vv,\,\tau;\,\theta)\,d\vv &=
			\frac{\rho}{2\alpha}\ave{\int_\VV\int_\VV\left(\varphi(\vv'_x)-\varphi(\vv)\right)g(\vv,\,\tau;\,\theta)g(\ww,\,\tau;\,\theta)\,d\vv\,d\ww} \\
		&\phantom{=} +\frac{\gamma}{\alpha}\rho\int_\VV\left(\varphi(\vv_y')-\varphi(\vv)\right)g(\vv,\,\tau;\,\theta)\,d\vv.
	\end{aligned}
	\label{eq:boltzmann.scaled-1}
\end{align}
Let us pick a smooth test function with compact support $\varphi\in C^3_c(\VV)$. Expanding the difference $\varphi(\vv'_x)-\varphi(\vv)$ at the right-hand side we have
\begin{align*}
	\varphi(\vv'_x)-\varphi(\vv) &= \partial_{v_x}\varphi(\vv)(v_x'-v_x)+\frac{1}{2}\partial^2_{v_x}\varphi(\vv)(v_x'-v_x)^2
		+\frac{1}{6}\partial^3_{v_x}\varphi(\bar{\vv}_x)(v_x'-v_x)^3,
\end{align*}
where $\bar{\vv}_x:=(\bar{v}_x,\,v_y)$ is a point such that $\min\{v_x,\,v_x'\}<\bar{v}_x<\max\{v_x,\,v_x'\}$. Using the expression of $v_x'$ given in~\eqref{eq:binary_x-v}, with $\ave{\xi}=0$, $\ave{\xi^2}=\sigma^2$, and plugging this expansion into~\eqref{eq:boltzmann.scaled-1} we discover
\begin{align}
	\begin{aligned}[b]
		\frac{d}{d\tau}\int_\VV\varphi(\vv)g(\vv,\,\tau;\,\theta)\,d\vv &=
			-\frac{\rho}{2}\int_\VV\int_\VV\partial_{v_x}\varphi(\vv)L(v_x,\,V_A,\,V_B,\,W_x)g(\vv,\,\tau;\,\theta)g(\ww,\,\tau;\,\theta)\,d\vv\,d\ww \\
		&\phantom{=} +\frac{\sigma^2\rho}{4}\int_\VV\int_\VV\partial_{v_x}^2\varphi(\vv)D^2(v_x,\,W_x)
				g(\vv,\,\tau;\,\theta)g(\ww,\,\tau;\,\theta)\,d\vv\,d\ww \\
		&\phantom{=} +R_\alpha(\varphi) \\			
		&\phantom{=} +\frac{\gamma}{\alpha}\rho\int_\VV\left(\varphi(\vv_y')-\varphi(\vv)\right)g(\vv,\,\tau;\,\theta)\,d\vv,
	\end{aligned}
	\label{eq:boltzmann.scaled-2}
\end{align}
where we have denoted for brevity
\begin{align}
	\begin{aligned}[c]
		L(v_x,\,V_A,\,V_B,\,W_x) &:=
			\begin{cases}
				P(\rho)(v_x-V_A) & \text{if } v_x<W_x \\
				(1-P(\rho))(v_x-V_B) & \text{if } v_x>W_x,
			\end{cases} \\[2mm]
		D(v_x,\,W_x) &:=
			\begin{cases}
				\sqrt{P(\rho)}D_A(v_x) & \text{if } v_x<W_x \\
				\sqrt{1-P(\rho)}D_B(v_x) & \text{if } v_x>W_x.
			\end{cases}
	\end{aligned}
	\label{eq:LD}
\end{align}
Furthermore the term $R_\alpha(\varphi)$ is
\begin{align*}
	R_\alpha(\varphi) &:= -\frac{\alpha\rho}{4}\int_\VV\int_\VV\partial_{v_x}^2\varphi(\vv)L(v_x,\,V_A,\,V_B,\,W_x)
		g(\vv,\,\tau;\,\theta)g(\ww,\,\tau;\,\theta)\,d\vv\,d\ww \\
	&\phantom{:=} -\frac{\alpha^2\rho}{12}\int_\VV\int_\VV\partial_{v_x}^3\varphi(\bar{\vv}_x)L^3(v_x,\,V_A,\,V_B,\,W_x)
		g(\vv,\,\tau;\,\theta)g(\ww,\,\tau;\,\theta)\,d\vv\,d\ww \\
	&\phantom{:=} -\frac{\alpha\sigma^2\rho}{4}\int_\VV\int_\VV\partial_{v_x}^3\varphi(\bar{\vv}_x)L(v_x,\,V_A,\,V_B,\,W_x)D^2(v_x,\,W_x)
		g(\vv,\,\tau;\,\theta)g(\ww,\,\tau;\,\theta)\,d\vv\,d\ww \\
	&\phantom{:=} +\frac{\sqrt{\alpha}\rho}{12}\ave{\xi^3}\int_\VV\int_\VV\partial_{v_x}^3\varphi(\bar{\vv}_x)D^3(v_x,\,W_x)
		g(\vv,\,\tau;\,\theta)g(\ww,\,\tau;\,\theta)\,d\vv\,d\ww
\end{align*}
and is such that
\begin{align*}
	\abs{R_\alpha(\varphi)} &\leq \frac{\alpha\rho}{4}\norm{\partial_{v_x}^2\varphi}\norm{L}
		+\frac{\alpha^2\rho}{12}\norm{\partial_{v_x}^3\varphi}\norm{L}^3 \\
	&\phantom{\leq} +\frac{\alpha\sigma^2\rho}{4}\norm{\partial_{v_x}^3\varphi}\norm{L}\norm{D}^2
		+\frac{\sqrt{\alpha}\rho}{12}\ave{\abs{\xi}^3}\norm{\partial_{v_x}^3\varphi}\norm{D}^3,
\end{align*}
where $\norm{\cdot}$ is the $\infty$-norm in $\VV$. Since $L$ and $D$ are bounded and $\xi$ has finite moments of any order thanks to Proposition~\ref{prop:xi.bounded}, we deduce $R_\alpha(\varphi)\to 0$ for $\alpha\to 0^+$.

Finally, taking the limit $\alpha\to 0^+$, $\gamma\to 0^+$ in~\eqref{eq:boltzmann.scaled-2} and assuming $\gamma/\alpha=O(1)$, i.e. $\gamma/\alpha\to \mu>0$, we obtain
\begin{align}
	\begin{aligned}[b]
		\frac{d}{d\tau}\int_\VV\varphi(\vv)g(\vv,\,\tau;\,\theta)\,d\vv &=
			-\int_\VV\partial_{v_x}\varphi(\vv)\cL[g](v_x,\,\tau;\,\theta)g(\vv,\,\tau;\,\theta)d\vv \\
		&\phantom{=} +\frac{\sigma^2}{2}\int_\VV\partial_{v_x}^2\varphi(\vv)\cD[g](v_x,\,\tau;\,\theta)g(\vv,\,\tau;\,\theta)\,d\vv \\
		&\phantom{=} +\mu\rho\int_\VV\left(\varphi(\vv_y')-\varphi(\vv)\right)g(\vv,\,\tau;\,\theta)\,d\vv,
	\end{aligned}
	\label{eq:boltzmann.limit}
\end{align}
where we have denoted
\begin{align}
	\begin{aligned}[c]
		\cL[g](v_x,\,\tau;\,\theta) &:= \frac{\rho}{2}\int_{\VV}L(v_x,\,V_A,\,V_B,\,W_x)g(\ww,\,\tau;\,\theta)\,d\ww, \\[2mm]
		\cD[g](v_x,\,\tau;\,\theta) &:= \frac{\rho}{2}\int_{\VV}D^2(v_x,\,W_x)g(\ww,\,\tau;\,\theta)\,d\ww.
	\end{aligned}
	\label{eq:LDcal}
\end{align}
Integrating back by parts the right-hand side of~\eqref{eq:boltzmann.limit} and recalling the compactness of the support of $\varphi$ we see that, owing to the arbitrariness of $\varphi$,~\eqref{eq:boltzmann.limit} is a weak form of the equation
\begin{equation}
	\partial_\tau g=\partial_{v_x}\left(\cL[g]g+\frac{\sigma^2}{2}\partial_{v_x}(\cD[g]g)\right)+\mu\rho Q_y(g),
	\label{eq:FP-Boltz}
\end{equation}
where
$$ Q_y(g)=Q_y(g)(\vv,\,\tau;\,\theta)=\frac{1}{1-\beta(u_x)}g('\vv_y,\,\tau;\,\theta)-g(\vv,\,\tau;\,\theta) $$
is the strong form of the Boltzmann-type collision operator in the $y$-direction. Specifically, $'\vv_y:=(v_x,\,'v_y)$ denotes the pre-interaction velocity yielding $\vv=(v_x,\,v_y)$ as post-interaction velocity according to~\eqref{eq:binary_y}, i.e. $'v_y=\frac{v_y-\beta(u_x)v_d(\theta)}{1-\beta(u_x)}$, while the coefficient $\frac{1}{1-\beta(u_x)}$ is the Jacobian of the transformation~\eqref{eq:binary_y}.

Equation~\eqref{eq:FP-Boltz} represents our \emph{hybrid kinetic model}, featuring at the right-hand side the Fokker-Planck-type operator
$$ \partial_{v_x}\left(\cL[g]g+\frac{\sigma^2}{2}\partial_{v_x}(\cD[g]g)\right) $$
for the frequent vehicle interactions in the $x$-direction complemented with the linear collision operator $Q_y(g)$ for the less frequent speed changes in the $y$-direction. The constant $\mu>0$ permits to tune the relative importance of the two terms.

\begin{remark}
Equation~\eqref{eq:boltzmann.limit}, hence~\eqref{eq:FP-Boltz}, has been obtained from~\eqref{eq:boltzmann.scaled-2} assuming that $\gamma/\alpha=O(1)$ for $\alpha,\,\gamma\to 0^+$. Other asymptotic regimes may be considered as well, among which we mention in particular the one with $\gamma/\alpha=o(1)$. It corresponds to interactions across the lanes so rare that for large times one recovers a classical one-dimensional traffic model with only $x$-dynamics along the lanes (in practice,~\eqref{eq:FP-Boltz} without the collision term $Q_y(g)$). Clearly, the choice leading to~\eqref{eq:FP-Boltz} is the one which guarantees a correct balance between the two contributions, thereby allowing one to study genuinely two-dimensional traffic dynamics with the proper frequencies.
\end{remark}

\subsection{Asymptotic distributions}
\label{sec:asympt.distr}
As stated at the beginning of Section~\ref{sec:hybridmodel}, model~\eqref{eq:FP-Boltz} preserves the macroscopic trends of the original Boltzmann-type model~\eqref{eq:boltzmann}. In particular, the large-time behaviour of $g$ is the same as that of $f$ under the quasi-invariant interaction limit, cf. Section~\ref{sec:FP-Boltz}. Owing to the results of Section~\ref{sec:macro_y}, this allows us to conclude immediately that $g_y(v_y,\,\tau;\,\theta)\to\delta_{v_d(\theta)}(v_y)$ for $\tau\to+\infty$, where $g_y(v_y,\,\tau;\,\theta):=\int_0^1g(\vv,\,\tau;\,\theta)\,dv_x$.

More in general, assuming for simplicity that $\beta(u_x)\equiv\beta_0>0$ in~\eqref{eq:binary_y}, so that the microscopic $x$- and $y$-dynamics are decoupled, we can look for asymptotic distributions $g^\infty=g^\infty(\vv;\,\theta)$ of the form
$$ g^\infty(\vv;\,\theta)=g^\infty_x(v_x)g^\infty_y(v_y;\,\theta). $$
Plugging this representation into~\eqref{eq:FP-Boltz} yields
\begin{equation}
	g^\infty_y\partial_{v_x}\left(\cL[g^\infty_x]g^\infty_x+\frac{\sigma^2}{2}\partial_{v_x}(\cD[g^\infty_x]g^\infty_x)\right)
		+\mu\rho g^\infty_xQ_y(g^\infty_y)=0,
	\label{eq:FP-Boltz.stationary}
\end{equation}
where
\begin{align*}
	\cL[g^\infty_x](v_x) &= \frac{\rho}{2}\int_0^1L(v_x,\,V_A,\,V_B,\,W_x)g^\infty_x(w_x)\,dw_x \\[2mm]
	\cD[g^\infty_x](v_x) &= \frac{\rho}{2}\int_0^1D^2(v_x,\,W_x)g^\infty_x(w_x)\,dw_x \\[2mm]
	Q_y(g^\infty_y)(v_y;\,\theta) &= \frac{1}{1-\beta_0}g^\infty_y('v_y;\,\theta)-g^\infty_y(v_y;\,\theta).
\end{align*}

Besides the already determined $g^\infty_y(v_y;\,\theta)=\delta_{v_d(\theta)}(v_y)$, which is such that $Q_y(g^\infty_y)=0$ (in the proper weak sense, cf.~\eqref{eq:boltzmann.limit}), from~\eqref{eq:FP-Boltz.stationary} we see that the asymptotic marginal distribution $g^\infty_x$ in the $x$-direction is determined by setting
$$ \cL[g^\infty_x]g^\infty_x+\frac{\sigma^2}{2}\partial_{v_x}(\cD[g^\infty_x]g^\infty_x)=0. $$
The way in which this equation can be solved may be strictly dependent on the choice of the terms $V_A$, $V_B$, $W_x$ in~\eqref{eq:binary_x-v}. For instance, from Section~\ref{sec:macro_x} we know that for $V_A=V_B=W_x=u_x$ we should expect $g^\infty_x(v_x;\,\theta)=\delta_{u_x}(v_x)$ (again in the proper weak sense, cf.~\eqref{eq:boltzmann.limit}). In a more general case, when we only assume $W_x=u_x$ and admit that $V_A$, $V_B$ may be functions of $v_x$, i.e. $V_A=V_A(v_x)$ and $V_B=V_B(v_x)$, following~\cite{visconti2017MMS} we determine
\begin{equation}
	g^\infty_x(v_x)=
		\begin{cases}
			C_A\left(\dfrac{V_A(u_x)-u_x}{V_A(v_x)-v_x}\right)^2
				\exp{\left(-\dfrac{2}{\sigma^2}\displaystyle{\int_{v_x}^{u_x}\frac{1}{V_A(v)-v}\,dv}\right)} & \text{if } v_x<u_x \\[5mm]
			C_B\left(\dfrac{u_x-V_B(u_x)}{v_x-V_B(v_x)}\right)^2
				\exp{\left(-\dfrac{2}{\sigma^2}\displaystyle{\int_{u_x}^{v_x}\frac{1}{v-V_B(v)}\,dv}\right)} & \text{if } v_x>u_x,
		\end{cases}
	\label{eq:g^infty_x}
\end{equation}
where $C_A,\,C_B>0$ are normalisation constants to be fixed in such a way that $\int_0^1g^\infty_x(v_x)\,dv_x=1$ and $\int_0^1v_xg^\infty_x(v_x)\,dv_x=u_x$.

\begin{remark}
We refer to the already mentioned paper~\cite{visconti2017MMS} for detailed expressions of $g^\infty_x$ in case of several choices of $V_A$, $V_B$ including~\eqref{eq:VA.VB}. We simply remark that for $V_A=V_B=u_x$ equation~\eqref{eq:g^infty_x} gives $g^\infty_x(v_x)=0$ for $v_x\neq u_x$, which is indeed consistent with the true distributional solution $g^\infty_x(v_x)=\delta_{u_x}(v_x)$.
\end{remark}

\section{Numerical results}
\label{sec:num}
In this section we present a numerical scheme for solving the hybrid stochastic Fokker-Planck-Boltzmann traffic equation~\eqref{eq:FP-Boltz} along the lines of Uncertainty Quantification (UQ). Among the popular numerical methods in the literature for UQ we recall here in particular stochastic collocation methods, stochastic Galerkin schemes and multi-level Monte Carlo methods, see e.g.~\cite{dimarco2017CHAPTER,mishra2013CHAPTER,xiu2005SISC}. In the sequel we will specifically consider stochastic collocation methods. They are based on introducing a discretisation $\{\theta_k\}_{k=0}^M\subset\rangeth$ of the uncertain parameter $\theta$ and then in solving, by means of well-established deterministic algorithms, a family of $M+1$ equations of the form~\eqref{eq:FP-Boltz}, each for a fixed value $\theta=\theta_k$. Their solutions $\{g(\vv,\,\tau;\,\theta_k)\}_{k=0}^M$ can finally be post-processed to obtain statistical information at both the kinetic and the macroscopic level with respect to $\theta$, cf. Section~\ref{sec:Boltzmann}. The collocation nodes $\theta_k$ are typically chosen according to Gaussian quadrature rules and consistently with the probability distribution of $\theta$, see~\cite{xiu2010BOOK,xiu2002SISC}.

\medskip

As a preliminary step to the numerical solution of~\eqref{eq:FP-Boltz}, we consider the following dimensional splitting: on a certain time interval $[\tau^n,\,\tau^{n+1}]$, with $\tau^n:=n\Delta{\tau}$, $n\in\mathbb{N}$ and $\Delta{\tau}>0$ fixed, we first solve the Fokker-Planck step
\begin{equation}
	\begin{cases}
		\partial_\tau\tilde{g}=\partial_{v_x}\left(\cL[\tilde{g}]\tilde{g}+\dfrac{\sigma^2}{2}\partial_{v_x}(\cD[\tilde{g}]\tilde{g})\right),
			& v_x\in\Vx,\ \tau^{n}<\tau\leq\tau^{n+1/2} \\
		\tilde{g}(\vv,\,\tau^n;\,\theta)=g(\vv,\,\tau^n;\,\theta)
	\end{cases}
	\label{eq:split.FP}
\end{equation}
for all $v_y\in\Vy$ (regarded as a parameter); then we solve the Boltzmann step as
\begin{equation}
	\begin{cases}
		\partial_\tau g=\mu\rho Q_y(g), & v_y\in\Vy,\ \tau^{n+1/2}<\tau\leq\tau^{n+1} \\[2mm]
		g(\vv,\,\tau^{n+1/2};\,\theta)=\tilde{g}(\vv,\,\tau^{n+1/2};\,\theta)
	\end{cases}
	\label{eq:split.Boltz}
\end{equation}
for all $v_x\in\Vx$ (regarded as a parameter). This process may be iterated to obtain the numerical solution of the initial equation at each time step.

To approximate numerically~\eqref{eq:split.FP} we adopt Structure Preserving methods which have been recently developed in~\cite{pareschi2017JSC}, see also~\cite{dimarco2017CHAPTER,tosin2017CMS_preprint}. Conversely, to solve~\eqref{eq:split.Boltz} we use direct Monte Carlo methods, see e.g.~\cite{pareschi2001ESAIMP,pareschi2013BOOK}. In particular, we employ stratified sampling methods to extract at each time step from $\tilde{g}$ the particle ensemble to be evolved in~\eqref{eq:split.Boltz}, see~\cite{pareschi2005ESAIMP}.

\subsection{Structure Preserving methods for nonlinear Fokker-Planck equations}
The derivation of Structure Preserving schemes in the fully nonlinear case (i.e., for a Fokker-Planck equation in which the diffusion coefficient depends on the unknown distribution function itself) follows from the approaches described in~\cite{buet2010CMS,chang1970JCP,larsen1985JCP} and has been further investigated, in both the deterministic and stochastic settings, in the recent works~\cite{dimarco2017CHAPTER,pareschi2017JSC}. 

We observe that for each $\theta_k$, $k=0,\,\dots,\,M$, the Fokker-Planck equation in~\eqref{eq:split.FP} may be written in flux form
$$ \partial_\tau\tilde{g}(\vv,\,\tau;\,\theta_k)=\partial_{v_x}\cF[\tilde{g}](\vv,\,\tau;\,\theta_k), $$
where the flux is
\begin{equation}
	\cF[\tilde{g}](\vv,\,\tau;\,\theta_k):=\cC[\tilde{g}](\vv,\,\tau;\,\theta_k)\tilde{g}(\vv,\,\tau;\,\theta_k)
		+\cD[\tilde{g}](\vv,\,\tau;\,\theta_k)\partial_{v_x}\tilde{g}(\vv,\,\tau;\,\theta_k)
	\label{eq:an_flux}
\end{equation}
with $\cC[\tilde{g}]:=\cL[\tilde{g}]+\partial_{v_x}\cD[\tilde{g}]$ and $\cL$, $\cD$ are defined in~\eqref{eq:LDcal}.

We introduce the uniform grids $\{v_{x,i}\}_{i=1}^{N_x}\subset\Vx$, $\{v_{y,j}\}_{j=1}^{N_y}\subset\Vy$ and let $\Delta{v_x}:=v_{x,i+1}-v_{x,i}>0$, $\Delta{v_y}:=v_{y,j+1}-v_{y,j}>0$ constant. We denote as usual $v_{x,i+1/2}:=v_{x,i}+\frac{1}{2}\Delta{v_x}$ and consider the conservative discretisation
\begin{equation}
	\dfrac{d}{d\tau}\tilde{g}_{i,j}^k(\tau)=\frac{\cF_{i+1/2,j}^k[\tilde{g}]-\cF_{i-1/2,j}^k[\tilde{g}]}{\Delta{v_x}},
		\qquad i=1,\,\dots,\,N_x,
	\label{eq:cons.discr}
\end{equation}
where, for each $\tau>0$, $\tilde{g}_{i,j}^k(\tau)\approx\tilde{g}(v_{x,i},\,v_{y,j},\,\tau;\,\theta_k)$ while $\cF_{i+1/2,j}^k[\tilde{g}]$ is the numerical flux that here we take of the form (cf.~\eqref{eq:an_flux})
\begin{equation}
	\cF_{i+1/2,j}^k[\tilde{g}]=\hat{\cC}^k_{i+1/2,j}\hat{g}_{i+1/2,j}^k+\frac{\cD^k_{i+1/2,j}}{2}\frac{\tilde{g}_{i+1,j}^k-\tilde{g}_{i,j}^k}{\Delta{v_x}}.
	\label{eq:numflux}
\end{equation}

In~\eqref{eq:numflux} we set in particular
$$ \hat{g}_{i+1/2,j}^k:=(1-\delta_{i+1/2,j}^k)\tilde{g}_{i+1,j}^k+\delta_{i+1/2,j}^k\tilde{g}_{i,j}^k, $$
which is a convex combination of the values of $\tilde{g}^k$ in the two adjacent cells $i$, $i+1$ provided $0\leq\delta_{i+1/2,j}^k\leq 1$. The standard approach based on central difference is obtained taking $\delta^k_{i+1/2,j}=\frac{1}{2}$ for all $i,\,j$ and $\hat{\cC}^k_{i+1/2,j}:=\cC[\tilde{g}](v_{x,i+1/2},\,v_{y,j},\,\tau;\,\theta_k)$. Setting in particular
\begin{equation}
	\hat{\cC}^k_{i+1/2,j}=\frac{\cL^k_{i+1/2,j}+(\partial_{v_x}\cD)^k_{i+1/2,j}}{\Delta{v_x}}
	\label{eq:Ctilde}
\end{equation}
we obtain explicitly 
\begin{equation}
	\delta_{i+1/2,j}^k=\frac{1}{\lambda_{i+1/2,j}^k}+\frac{1}{1-\exp(\lambda_{i+1/2,j}^k)}, \qquad
		\lambda_{i+1/2,j}^k:=\frac{\Delta{v_x}\hat{\cC}^k_{i+1/2,j}}{\cD^k_{i+1/2,j}}
	\label{eq:deltaCC}
\end{equation}
and the following result holds, cf.~\cite{pareschi2017JSC}: 
\begin{proposition}
The numerical flux~\eqref{eq:numflux} with $\hat{\cC}^k_{i+1/2,j}$, $\delta_{i+1/2,j}^k$ given by~\eqref{eq:Ctilde}-\eqref{eq:deltaCC} vanishes when the exact flux~\eqref{eq:an_flux} vanishes in $[v_{x,i},\,v_{x,i+1}]\subset\Vx$. Moreover, $\delta^k_{i+1/2,j}\in [0,\,1]$ for all $i,\,j$ for every fixed $k$.
\end{proposition}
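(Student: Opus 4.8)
The plan is to treat the two assertions separately, exploiting that the numerical flux~\eqref{eq:numflux} is affine in the single weight $\delta_{i+1/2,j}^k$, which is tailored so as to reproduce \emph{exactly} the local equilibrium of the continuous flux~\eqref{eq:an_flux}. For the vanishing of the flux, I would start from the condition that the exact flux is zero on $[v_{x,i},\,v_{x,i+1}]$, namely $\cC[\tilde{g}]\tilde{g}+\cD[\tilde{g}]\partial_{v_x}\tilde{g}=0$ there. Freezing the coefficients $\cC$, $\cD$ at the interface $v_{x,i+1/2}$ — consistently with~\eqref{eq:Ctilde} — this is a first-order linear ODE whose integration across one cell links the nodal values through the exponential relation $\tilde{g}_{i+1,j}^k=e^{-\lambda_{i+1/2,j}^k}\tilde{g}_{i,j}^k$, with $\lambda_{i+1/2,j}^k$ the exponent of~\eqref{eq:deltaCC}. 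Inserting this discrete equilibrium, together with the convex combination defining $\hat{g}_{i+1/2,j}^k$, into~\eqref{eq:numflux}, the right-hand side collapses to a scalar multiple of $\tilde{g}_{i,j}^k$; imposing that it vanish yields a single linear equation in $\delta_{i+1/2,j}^k$, whose solution, after using the identity $\frac{-e^{-\lambda}}{1-e^{-\lambda}}=\frac{1}{1-e^\lambda}$, is precisely~\eqref{eq:deltaCC}. Thus~\eqref{eq:deltaCC} \emph{is} the root of $\cF_{i+1/2,j}^k[\tilde{g}]=0$ at the discrete equilibrium, and the claim follows by retracing this construction; substituting~\eqref{eq:deltaCC} back into~\eqref{eq:numflux} is an equivalent, purely computational check.

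For the bound $\delta_{i+1/2,j}^k\in[0,\,1]$ I would analyse the scalar map $\delta(\lambda):=\frac{1}{\lambda}+\frac{1}{1-e^\lambda}$, defined on $\RR\setminus\{0\}$ wherever $\cD^k_{i+1/2,j}\neq 0$ so that $\lambda_{i+1/2,j}^k$ is finite. First I would record the limits: a Taylor expansion of $e^\lambda$ about the origin gives $\delta(\lambda)\to\frac{1}{2}$, so $\delta$ extends continuously across $\lambda=0$ (recovering the central scheme), while $\delta(\lambda)\to 0$ as $\lambda\to+\infty$ and $\delta(\lambda)\to 1$ as $\lambda\to-\infty$. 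Then it suffices to establish strict monotonicity, i.e. $\delta'(\lambda)=-\frac{1}{\lambda^2}+\frac{e^\lambda}{(e^\lambda-1)^2}<0$, equivalently $(e^\lambda-1)^2>\lambda^2e^\lambda$. This I would obtain from the factorisation
$$ (e^\lambda-1)^2-\lambda^2e^\lambda=e^\lambda\bigl(2\sinh(\lambda/2)-\lambda\bigr)\bigl(2\sinh(\lambda/2)+\lambda\bigr), $$
noting that $\sinh x-x$ and $\sinh x+x$ both carry the sign of $x$, so that each factor carries the sign of $\lambda$ and the product is positive for every $\lambda\neq 0$. Hence $\delta$ decreases strictly from $1$ to $0$, giving $\delta_{i+1/2,j}^k\in(0,\,1)\subset[0,\,1]$ for all $i,\,j$.

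The hard part is the monotonicity inequality $(e^\lambda-1)^2>\lambda^2e^\lambda$, which looks inaccessible until one spots the $\sinh$ factorisation; everything else in the bound reduces to the limit analysis. A secondary subtlety is matching the frozen-coefficient equilibrium ratio to the exact exponent $\lambda_{i+1/2,j}^k$ of~\eqref{eq:deltaCC}, which requires carefully tracking the discretisation of $\partial_{v_x}\cD$ embedded in~\eqref{eq:Ctilde}. The boundary cells, where $\cD$ vanishes at $v_x=0,\,1$ by~\eqref{eq:DA_DB} and hence $\lambda_{i+1/2,j}^k\to\pm\infty$, cause no trouble: the limit analysis already yields $\delta\to 0$ or $1$, both admissible, with the endpoints handled through the no-flux conditions.
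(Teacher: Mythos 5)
Your argument is correct and is exactly the standard Chang--Cooper/structure-preserving derivation that the paper defers to via the citation of Pareschi--Zanella: impose the discrete equilibrium ratio $\tilde{g}_{i+1,j}^k=e^{-\lambda_{i+1/2,j}^k}\tilde{g}_{i,j}^k$ coming from the vanishing of the (frozen-coefficient) exact flux, solve the resulting affine equation for $\delta_{i+1/2,j}^k$, and then bound $\delta(\lambda)=\frac{1}{\lambda}+\frac{1}{1-e^{\lambda}}$ via its limits at $0,\pm\infty$ and the monotonicity inequality $(e^{\lambda}-1)^2>\lambda^2e^{\lambda}$, which your $\sinh$ factorisation establishes correctly. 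The only caveat, which does not affect the substance, is that your algebra reproduces~\eqref{eq:deltaCC} for the standard normalisation of the flux and of $\hat{\cC}$ (i.e.\ without the factor $\tfrac12$ in~\eqref{eq:numflux} and the $\Delta v_x$ in~\eqref{eq:Ctilde}), so one must track those factors consistently with the paper's conventions.
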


\begin{remark}
In the limit case of vanishing diffusion ($\cD=0$) we obtain the weights
$$ \delta_{i+1/2,j}^k=
	\begin{cases}
		0 & \text{if } \cL_{i+1/2,j}^k>0 \\
		1 & \text{if } \cL_{i+1/2,j}^k<0
	\end{cases}
$$
and the scheme reduces to a first order upwind scheme for the corresponding continuity equation.
\end{remark}

The introduced scheme preserves the asymptotic solutions of the Fokker-Planck equation in~\eqref{eq:split.FP} with second order accuracy. In the case of linear diffusion, i.e. for $\cD$ independent of $\tilde{g}$, it captures such solutions with arbitrary accuracy, see~\cite{dimarco2017CHAPTER,pareschi2017JSC}. Furthermore, for general strong stability preserving and high-order semi-implicit methods it is possible to prove the non-negativity of the numerical solution without any restrictions on $\Delta{v_x}$ but with suitable restrictions on the time step $\Delta{\tau}$.

For the explicit-in-time scheme deduced from~\eqref{eq:cons.discr}, i.e.
$$ \tilde{g}_{i,j}^{k,n+1/2}=\tilde{g}_{i,j}^{k,n}+\frac{\Delta{\tau}}{\Delta{v_x}}\left(\cF_{i+1/2,j}^{k,n}[\tilde{g}]
	-\cF_{i-1/2,j}^{k,n}[\tilde{g}]\right), $$
where $\tilde{g}_{i,j}^{k,n}\approx\tilde{g}(v_{x,i},\,v_{y,j},\,n\Delta{\tau};\,\theta_k)$, the following result holds:
\begin{proposition} \label{prop:ex}
Let
$$ \Delta{\tau}\leq\frac{\Delta v_x^2}{2\left(\max\limits_{i,j,k}\abs{\hat{\cC}_{i+1/2,j}^k}\Delta{v_x}
	+\max\limits_{i,j,k}\cD_{i+1/2,j}^k\right)}. $$
Then the explicit-in-time scheme is positivity preserving, i.e. $\tilde{g}_{i,j}^{k,n+1/2}\geq 0$ if $\tilde{g}_{i,j}^{k,n}\geq 0$ for all $i,\,j$.
\end{proposition}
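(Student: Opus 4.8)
The plan is to exhibit the explicit update as a positive three-point stencil and then read off the signs of its coefficients. I fix the indices $j$ and $k$, which are mere parameters in the $x$-Fokker--Planck step since \eqref{eq:cons.discr} couples only the three nodes $v_{x,i-1},v_{x,i},v_{x,i+1}$ at a given $v_{y,j}$ and $\theta_k$. Inserting the numerical flux \eqref{eq:numflux}, together with the convex combination defining $\hat g^k_{i\pm1/2,j}$, into $\tilde g^{k,n}_{i,j}+\frac{\Delta\tau}{\Delta v_x}(\cF^{k,n}_{i+1/2,j}-\cF^{k,n}_{i-1/2,j})$ and collecting the unknowns, I would write
\[
\tilde g^{k,n+1/2}_{i,j}=a_{i-1}\tilde g^{k,n}_{i-1,j}+a_i\tilde g^{k,n}_{i,j}+a_{i+1}\tilde g^{k,n}_{i+1,j}.
\]
Once this form is obtained, positivity preservation is automatic: a non-negative combination of non-negative numbers is non-negative. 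The whole statement therefore reduces to proving $a_{i-1},a_i,a_{i+1}\ge0$.

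I would first handle the two off-diagonal coefficients. Up to the common positive factor $\frac{\Delta\tau}{\Delta v_x}$, and after using \eqref{eq:numflux}, they equal respectively
\[
\hat{\cC}^k_{i+1/2,j}\bigl(1-\delta^k_{i+1/2,j}\bigr)+\frac{\cD^k_{i+1/2,j}}{2\Delta v_x}
\qquad\text{and}\qquad
\frac{\cD^k_{i-1/2,j}}{2\Delta v_x}-\hat{\cC}^k_{i-1/2,j}\,\delta^k_{i-1/2,j}.
\]
Their non-negativity is the structural property built into the exponentially fitted weights: substituting the explicit expressions \eqref{eq:Ctilde}-\eqref{eq:deltaCC} (so that $\hat{\cC}=\lambda\cD/\Delta v_x$ with $\lambda$ from \eqref{eq:deltaCC}), each bracket reduces to a Bernoulli-type coefficient tied to the function $\lambda\mapsto\lambda/(e^{\lambda}-1)$, which is positive on $\RR$, whence $a_{i\pm1}\ge0$ regardless of the time step. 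I expect this to be the main obstacle, precisely because the sole bound $\delta\in[0,1]$ furnished by the previous proposition does not by itself fix the signs: for a generic $\delta$ the off-diagonal coefficients would stay non-negative only under an extra mesh-Péclet restriction, and it is the special form of $\hat{\cC}$ and $\delta$ that removes such a restriction, independently of $\Delta\tau$.

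It then remains to control the diagonal coefficient, which I would write as $a_i=1+\frac{\Delta\tau}{\Delta v_x}\Sigma$, with $\Sigma$ the signed combination of the four flux coefficients sitting at the interfaces $i\pm1/2$ that multiply $\tilde g^{k,n}_{i,j}$. Here no structural subtlety is needed: I would bound $|\Sigma|$ by the triangle inequality, using $0\le\delta^k_{i\pm1/2,j}\le1$ together with $|\hat{\cC}^k_{i\pm1/2,j}|\le\max_{i,j,k}|\hat{\cC}^k_{i+1/2,j}|$ and $\cD^k_{i\pm1/2,j}\le\max_{i,j,k}\cD^k_{i+1/2,j}$. The two convective contributions are then each dominated by $\max_{i,j,k}|\hat{\cC}^k_{i+1/2,j}|$ and the two diffusive ones by $\tfrac{1}{2\Delta v_x}\max_{i,j,k}\cD^k_{i+1/2,j}$, giving $|\Sigma|\le 2\max_{i,j,k}|\hat{\cC}^k_{i+1/2,j}|+\tfrac{1}{\Delta v_x}\max_{i,j,k}\cD^k_{i+1/2,j}$. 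The condition $\frac{\Delta\tau}{\Delta v_x}|\Sigma|\le1$, which forces $a_i\ge0$, is therefore implied by the time-step bound in the statement; since the latter is at least as restrictive, it guarantees $a_i\ge0$. Combining this with the non-negativity of $a_{i\pm1}$ yields $\tilde g^{k,n+1/2}_{i,j}\ge0$, as claimed. As a consistency check one verifies $a_{i-1}+a_i+a_{i+1}=1$ — the no-flux discretisation conserves mass — so the update is actually a convex combination, though only non-negativity is needed.
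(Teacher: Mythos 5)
The paper does not contain a proof of this proposition: it explicitly omits the proofs of Propositions~3 and~4, deferring to~\cite{pareschi2017JSC}. Your proposal reconstructs precisely the standard argument of that reference — write the explicit update as a three-point stencil, show the off-diagonal weights are non-negative unconditionally thanks to the exponential fitting, and use the CFL restriction only to keep the diagonal weight non-negative — so the strategy is the right one. Your identification of the three coefficients is correct, and the diagonal part is complete: the bound $\abs{\Sigma}\le 2\max\abs{\hat{\cC}}+\Delta v_x^{-1}\max\cD$ yields $a_i\ge0$ under a condition that is implied by (indeed slightly weaker than) the stated time-step restriction, exactly as you observe.

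The one step you assert rather than compute is, however, exactly where the argument is fragile, and against the paper's literal formulas it fails. The cancellation you invoke requires the diffusion coefficient in the numerical flux to be the \emph{same} $\cD$ that enters $\lambda$ in~\eqref{eq:deltaCC}. With the factor $\cD/2$ of~\eqref{eq:numflux} but $\lambda=\Delta v_x\hat{\cC}/\cD$, one finds (dropping indices)
\[
\hat{\cC}(1-\delta)+\frac{\cD}{2\Delta v_x}=\frac{\cD}{\Delta v_x}\left(\frac{\lambda e^{\lambda}}{e^{\lambda}-1}-\frac12\right),
\qquad
\frac{\cD}{2\Delta v_x}-\hat{\cC}\,\delta=\frac{\cD}{\Delta v_x}\left(\frac{\lambda}{e^{\lambda}-1}-\frac12\right),
\]
and these brackets are \emph{not} sign-definite: the second is already negative at $\lambda=2$ and the first at $\lambda=-2$, i.e.\ at moderate mesh P\'eclet numbers. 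With the normalisation of~\cite{pareschi2017JSC} (flux $\hat{\cC}\hat g+\cD(\tilde g_{i+1}-\tilde g_i)/\Delta v_x$) the brackets are exactly $\lambda e^{\lambda}/(e^{\lambda}-1)>0$ and $\lambda/(e^{\lambda}-1)>0$ and your claim holds. So the factor $1/2$ in~\eqref{eq:numflux} is most plausibly a normalisation slip of the paper relative to~\eqref{eq:deltaCC}, but a complete proof must display this computation and fix the normalisation rather than state that ``each bracket reduces to a Bernoulli-type coefficient''. Separately, your closing consistency check is wrong: the row sum is $a_{i-1}+a_i+a_{i+1}=1+\frac{\Delta\tau}{\Delta v_x}\bigl(\hat{\cC}_{i+1/2,j}^k-\hat{\cC}_{i-1/2,j}^k\bigr)\ne1$ in general; mass conservation comes from the telescoping of the fluxes when summing over $i$ (the column sums), not from the stencil being a convex combination. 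Neither issue changes the overall (correct) strategy.
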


To avoid parabolic time step restrictions typical of the explicit schemes, such as the one in Proposition~\ref{prop:ex}, it is possible to resort to a semi-implicit scheme:
$$ \tilde{g}_{i,j}^{k,n+1/2}=\tilde{g}_{i,j}^{k,n}+\frac{\Delta{\tau}}{\Delta{v_x}}\left(\check{\cF}_{i+1/2,j}^{k,n+1/2}[\tilde{g}]
	-\check{\cF}_{i-1/2,j}^{k,n+1/2}[\tilde{g}]\right) $$
with 
$$ \check{\cF}_{i+1/2,j}^{k,n+1/2}:=\hat{\cC}^{k,n}_{i+1/2,j}
	\left[\left(1-\delta_{i+1/2,j}^{k,n}\right)\tilde{g}^{k,n+1/2}_{i+1,j}+\delta_{i+1/2,j}^{k,n}\tilde{g}_{i,j}^{k,n+1/2}\right]
		+\cD_{i+1/2,j}^n\frac{\tilde{g}_{i+1,j}^{k,n+1/2}-\tilde{g}_{i,j}^{k,n+1/2}}{\Delta v_x}. $$
In this case the following result holds:
\begin{proposition} \label{prop:SI_dt}
Let
$$ \Delta{\tau}\leq\frac{\Delta v_x}{2\max\limits_{i,j,k}\abs{\hat{\cC}_{i+1/2,j}^{k,n}}}. $$
Then the semi-implicit scheme is positivity preserving.
\end{proposition}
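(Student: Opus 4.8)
The plan is to exploit the fact that the semi-implicit scheme freezes all the nonlinear coefficients $\hat{\cC}^{k,n}_{i+1/2,j}$, $\delta^{k,n}_{i+1/2,j}$ and $\cD^n_{i+1/2,j}$ at the old time level $n$, so that for each fixed lateral index $j$ and each collocation node $\theta_k$ the update for $\{\tilde{g}^{k,n+1/2}_{i,j}\}_i$ is a \emph{linear} tridiagonal system. First I would expand $\check{\cF}^{k,n+1/2}_{i+1/2,j}-\check{\cF}^{k,n+1/2}_{i-1/2,j}$ and collect the scheme into the three-point form
\[
	\tilde{g}^{k,n+1/2}_{i,j}=\tilde{g}^{k,n}_{i,j}+\frac{\Delta\tau}{\Delta v_x}\left(a_i\tilde{g}^{k,n+1/2}_{i+1,j}+b_i\tilde{g}^{k,n+1/2}_{i,j}+c_i\tilde{g}^{k,n+1/2}_{i-1,j}\right),
\]
with neighbour coefficients $a_i:=\hat{\cC}^{k,n}_{i+1/2,j}(1-\delta^{k,n}_{i+1/2,j})+\cD^n_{i+1/2,j}/\Delta v_x$ and $c_i:=-\hat{\cC}^{k,n}_{i-1/2,j}\delta^{k,n}_{i-1/2,j}+\cD^n_{i-1/2,j}/\Delta v_x$, and diagonal coefficient $b_i$ collecting the remaining terms.

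Two structural facts then drive the argument. First, the neighbour coefficients are non-negative: substituting the explicit weights~\eqref{eq:deltaCC} into $a_i$ and $c_i$, the convective part recombines with the diffusive part into $\cD^n_{i\pm1/2,j}/\Delta v_x$ times a Bernoulli-type factor ($\lambda e^{\lambda}/(e^{\lambda}-1)$ and $\lambda/(e^{\lambda}-1)$ respectively, with $\lambda=\lambda^{k,n}_{i\pm1/2,j}$), which is positive for every sign of $\lambda$; hence $a_i,\,c_i\geq 0$ because $\cD\geq 0$. Second, the purely algebraic identity $a_i+b_i+c_i=\hat{\cC}^{k,n}_{i+1/2,j}-\hat{\cC}^{k,n}_{i-1/2,j}$ holds, since the diffusive contributions and the weights $\delta,\,1-\delta$ cancel telescopically; this is the discrete image of the conservative flux form of the operator.

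With these I would run a discrete minimum principle. Assume $\tilde{g}^{k,n}_{i,j}\geq 0$ for all $i$ and let $m:=\tilde{g}^{k,n+1/2}_{i_0,j}=\min_i\tilde{g}^{k,n+1/2}_{i,j}$; suppose $m<0$. Evaluating the scheme at $i_0$, moving the diagonal term to the left, and using $a_{i_0},c_{i_0}\geq 0$ together with $\tilde{g}^{k,n+1/2}_{i_0\pm1,j}\geq m$ yields $m\bigl(1-\frac{\Delta\tau}{\Delta v_x}(a_{i_0}+b_{i_0}+c_{i_0})\bigr)\geq\tilde{g}^{k,n}_{i_0,j}\geq 0$, that is, by the second fact, $m\bigl(1-\frac{\Delta\tau}{\Delta v_x}(\hat{\cC}^{k,n}_{i_0+1/2,j}-\hat{\cC}^{k,n}_{i_0-1/2,j})\bigr)\geq 0$. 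Since $\abs{\hat{\cC}^{k,n}_{i_0+1/2,j}-\hat{\cC}^{k,n}_{i_0-1/2,j}}\leq 2\max_{i,j,k}\abs{\hat{\cC}^{k,n}_{i+1/2,j}}$, the stated bound on $\Delta\tau$ makes the prefactor non-negative, and strictly positive away from the extremal case, forcing $m\geq 0$ and contradicting $m<0$.

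The delicate point I expect is exactly the strictness of that prefactor: the CFL bound gives $1-\frac{\Delta\tau}{\Delta v_x}(\hat{\cC}^{k,n}_{i_0+1/2,j}-\hat{\cC}^{k,n}_{i_0-1/2,j})\geq 0$, but equality can occur when the difference attains its extreme value $2\max\abs{\hat{\cC}}$, so this borderline must be excluded (by taking the inequality on $\Delta\tau$ strict, or by a continuity/perturbation argument), as must the boundary cells $i=1,N_x$, where the truncated stencil and the no-flux condition have to be used to retain the same sign structure. Equivalently, the two facts say that the system matrix $A$ has non-positive off-diagonal entries and is (strictly) row diagonally dominant under the CFL, hence is a nonsingular M-matrix; then $A^{-1}\geq 0$ and $\tilde{g}^{k,n+1/2}=A^{-1}\tilde{g}^{k,n}\geq 0$, which simultaneously settles solvability of the implicit step.
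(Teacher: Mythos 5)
Your argument is correct, and in fact the paper gives no proof of this proposition at all --- it explicitly omits the proofs of Propositions~\ref{prop:ex} and~\ref{prop:SI_dt}, deferring to the reference on structure preserving schemes; your M-matrix/discrete-minimum-principle argument is precisely the standard one used there. The two structural facts you isolate check out: the neighbour coefficients reduce, after substituting~\eqref{eq:deltaCC}, to $\tfrac{\cD}{\Delta v_x}\cdot\tfrac{\lambda e^{\lambda}}{e^{\lambda}-1}$ and $\tfrac{\cD}{\Delta v_x}\cdot\tfrac{\lambda}{e^{\lambda}-1}$, both non-negative for either sign of $\lambda$, and the telescoping identity $a_i+b_i+c_i=\hat{\cC}^{k,n}_{i+1/2,j}-\hat{\cC}^{k,n}_{i-1/2,j}$ holds, so the CFL bound indeed gives (weak) row diagonal dominance with non-positive off-diagonal entries. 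The caveats you flag yourself --- the borderline equality case when $\Delta\tau$ saturates the bound and $\hat{\cC}^{k,n}_{i_0+1/2,j}-\hat{\cC}^{k,n}_{i_0-1/2,j}$ attains $2\max_{i,j,k}\abs{\hat{\cC}^{k,n}_{i+1/2,j}}$, and the treatment of the boundary cells under the no-flux closure --- are the only loose ends, and they are minor; no genuine gap.
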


We omit the proofs of Propositions~\ref{prop:ex},~\ref{prop:SI_dt}, which are reminiscent of similar ones proposed in~\cite{pareschi2017JSC}. It is also possible to prove that the SP scheme dissipates the numerical entropy under specific assumptions, see~\cite{dimarco2017CHAPTER} for details.

\bigskip

\begin{figure}[!t]
\centering
\subfigure[$\sigma^2=10$]{\includegraphics[scale=0.45]{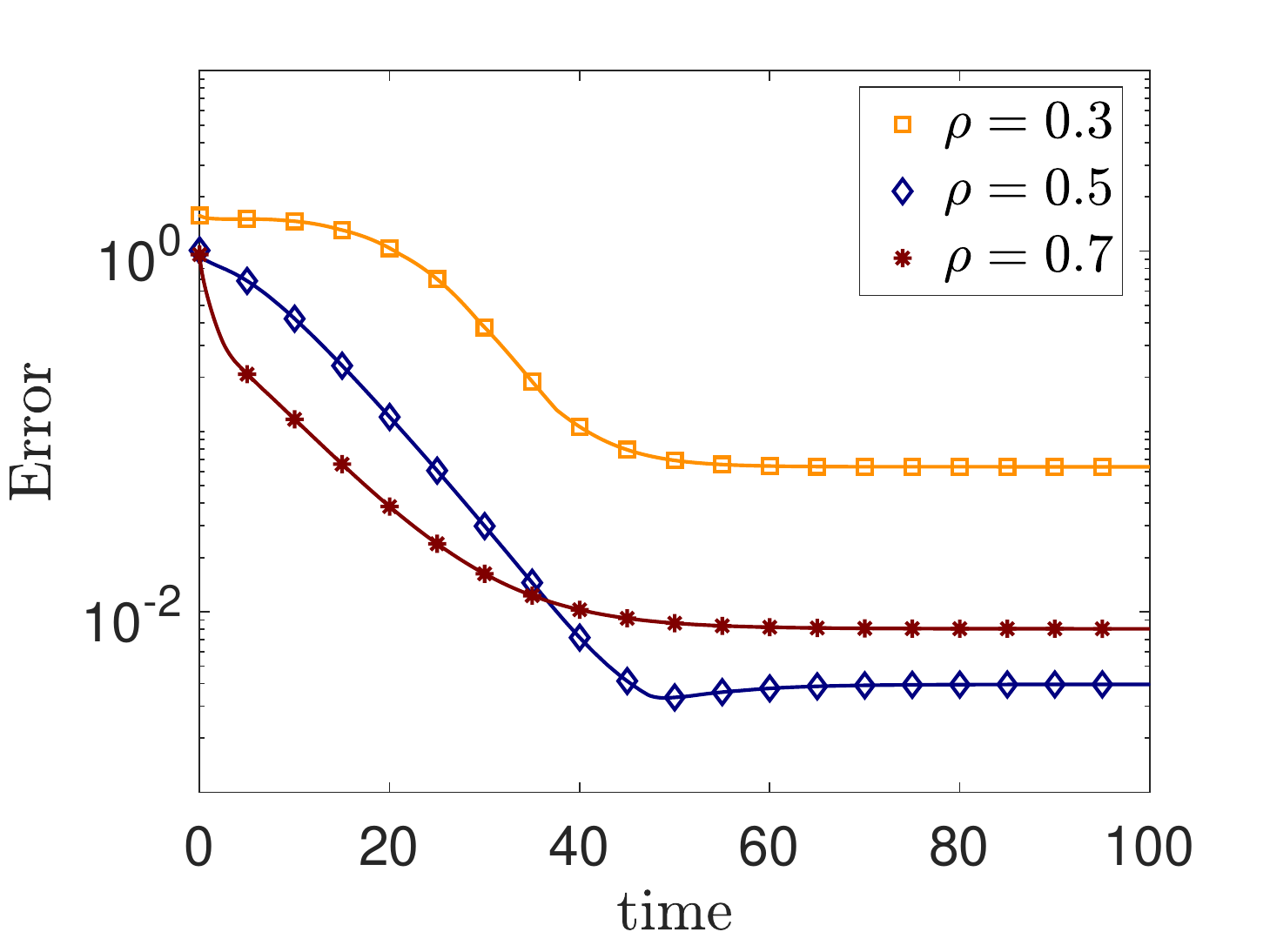}}
\subfigure[$\sigma^2=15$]{\includegraphics[scale=0.45]{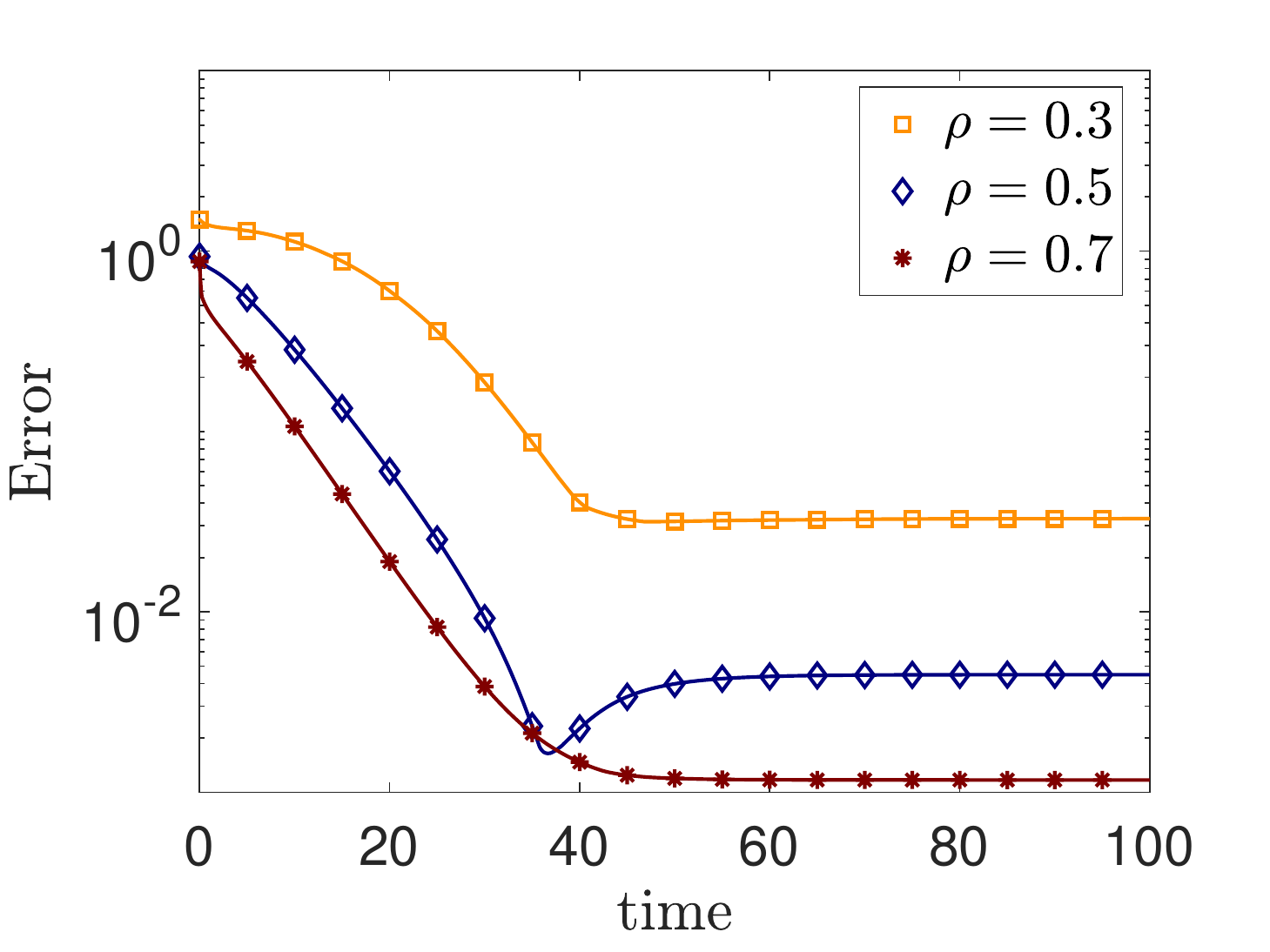}}
\caption{Evolution of the relative $L^1$ error on the numerical solution of the Fokker-Planck equation in~\eqref{eq:split.FP} computed with $N_x=41$ grid points with respect to a reference solution obtained with $N_x=321$ grid points. Time integration has been performed in the interval $[0,\,100]$ with the semi-implicit method with $\Delta{\tau}=\Delta{v_x}/\sigma^2$.}
\label{fig:rel_errorFP}
\end{figure}

\begin{figure}[!t]
\centering
\subfigure[$\rho=0.3,\,\sigma^2=10$]{
\includegraphics[scale=0.32]{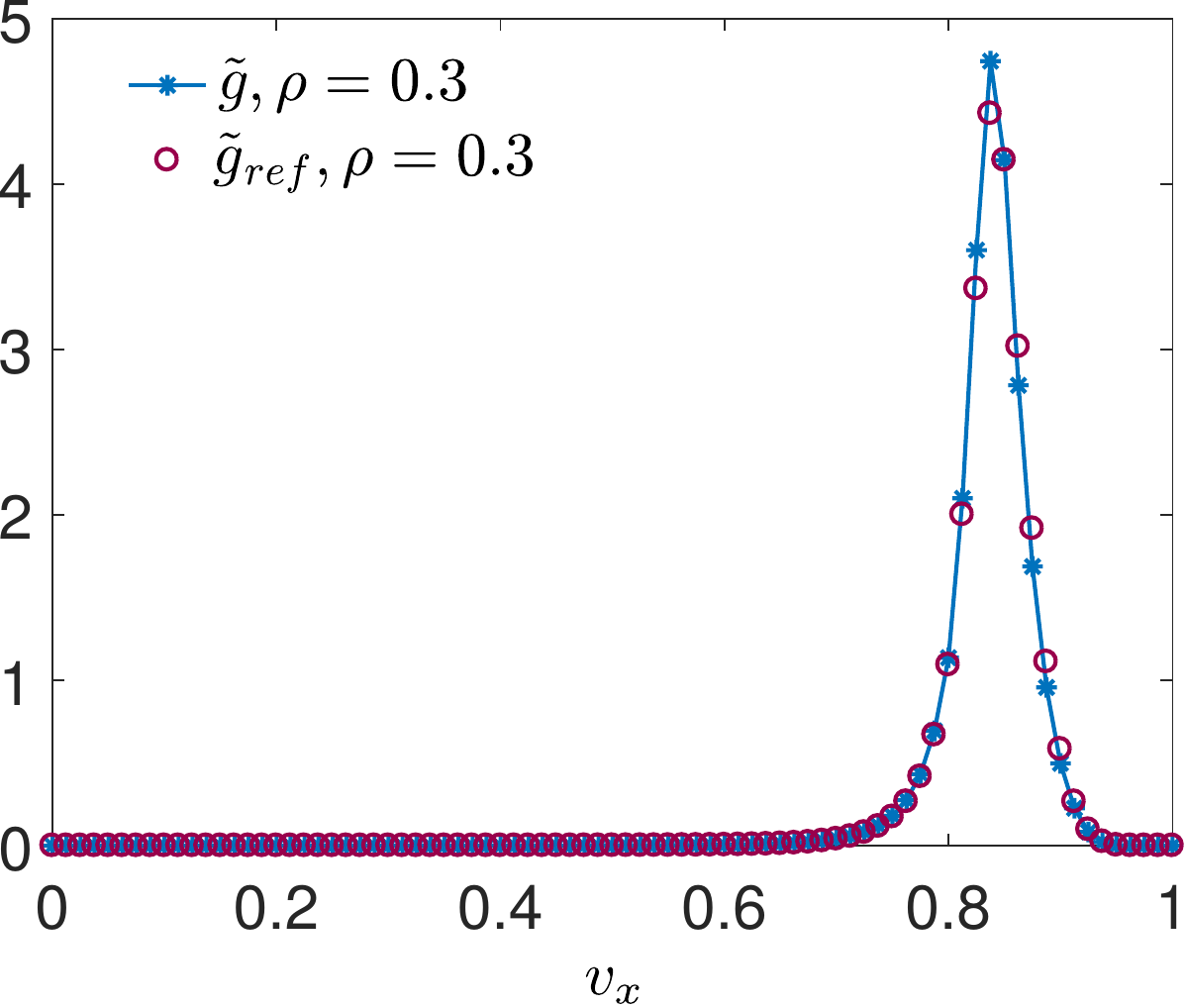}}
\subfigure[$\rho=0.5,\,\sigma^2=10$]{
\includegraphics[scale=0.32]{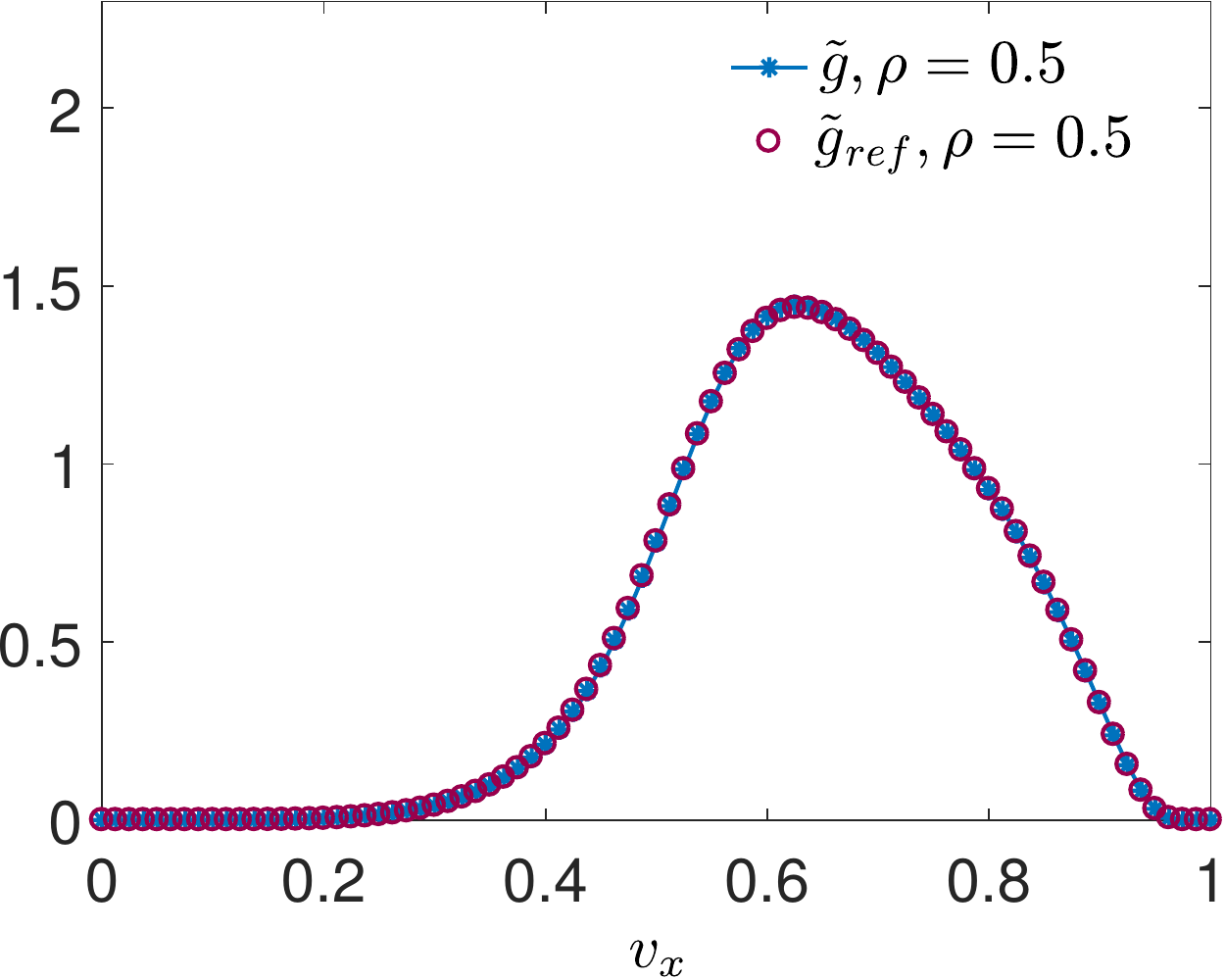}}
\subfigure[$\rho=0.7,\,\sigma^2=10$]{
\includegraphics[scale=0.32]{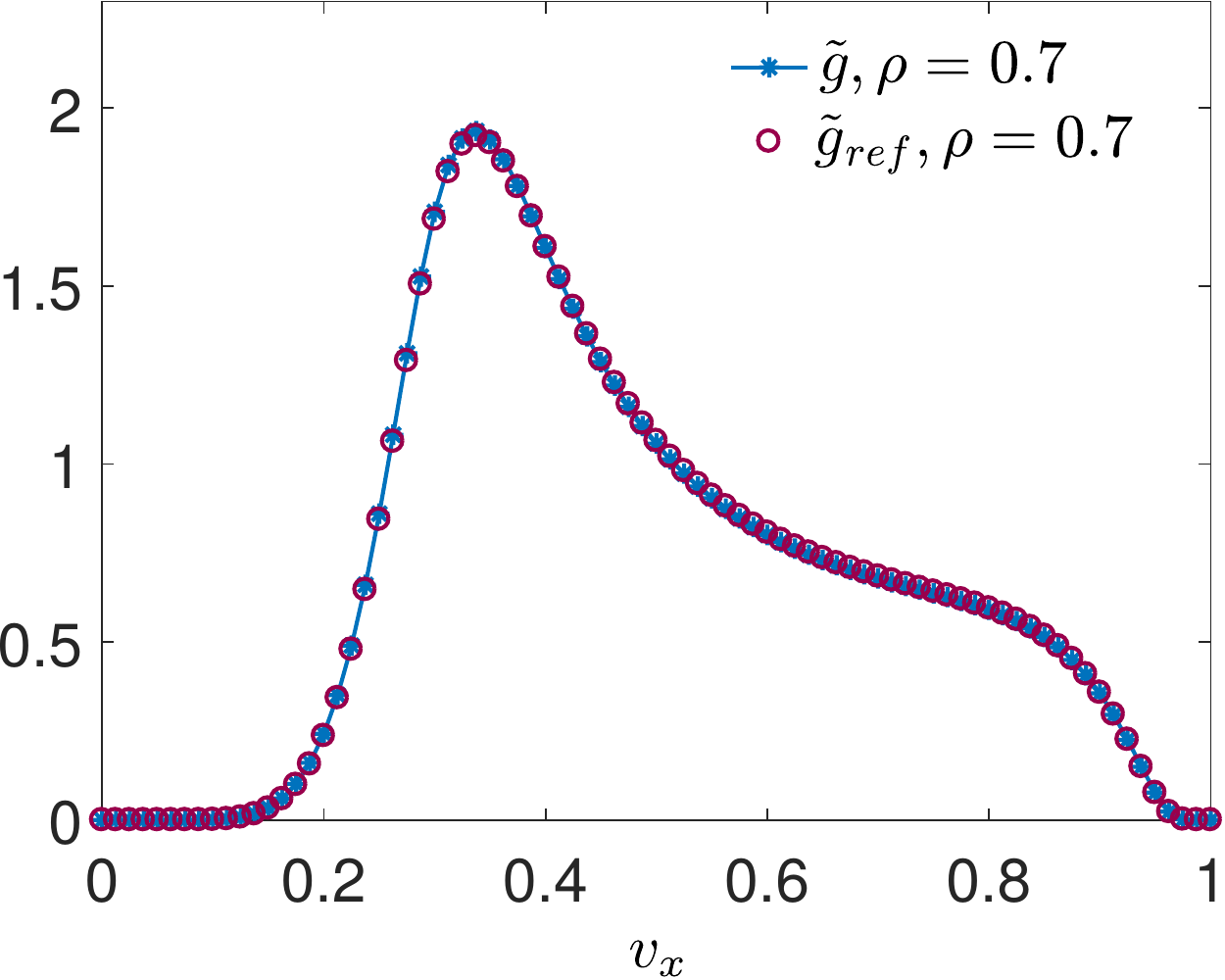}}\\
\subfigure[$\rho=0.3,\,\sigma^2=15$]{
\includegraphics[scale=0.32]{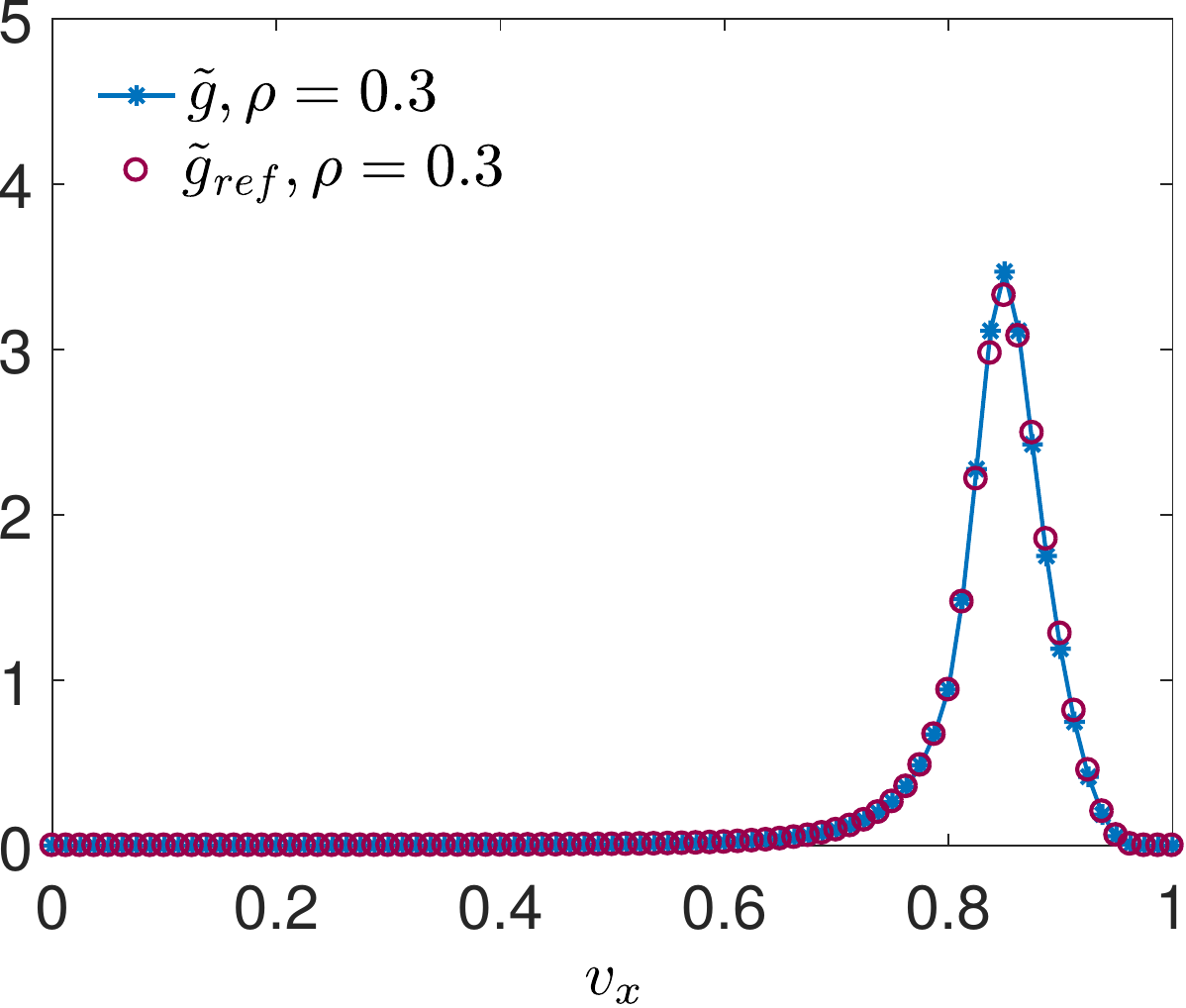}}
\subfigure[$\rho=0.5,\,\sigma^2=15$]{
\includegraphics[scale=0.32]{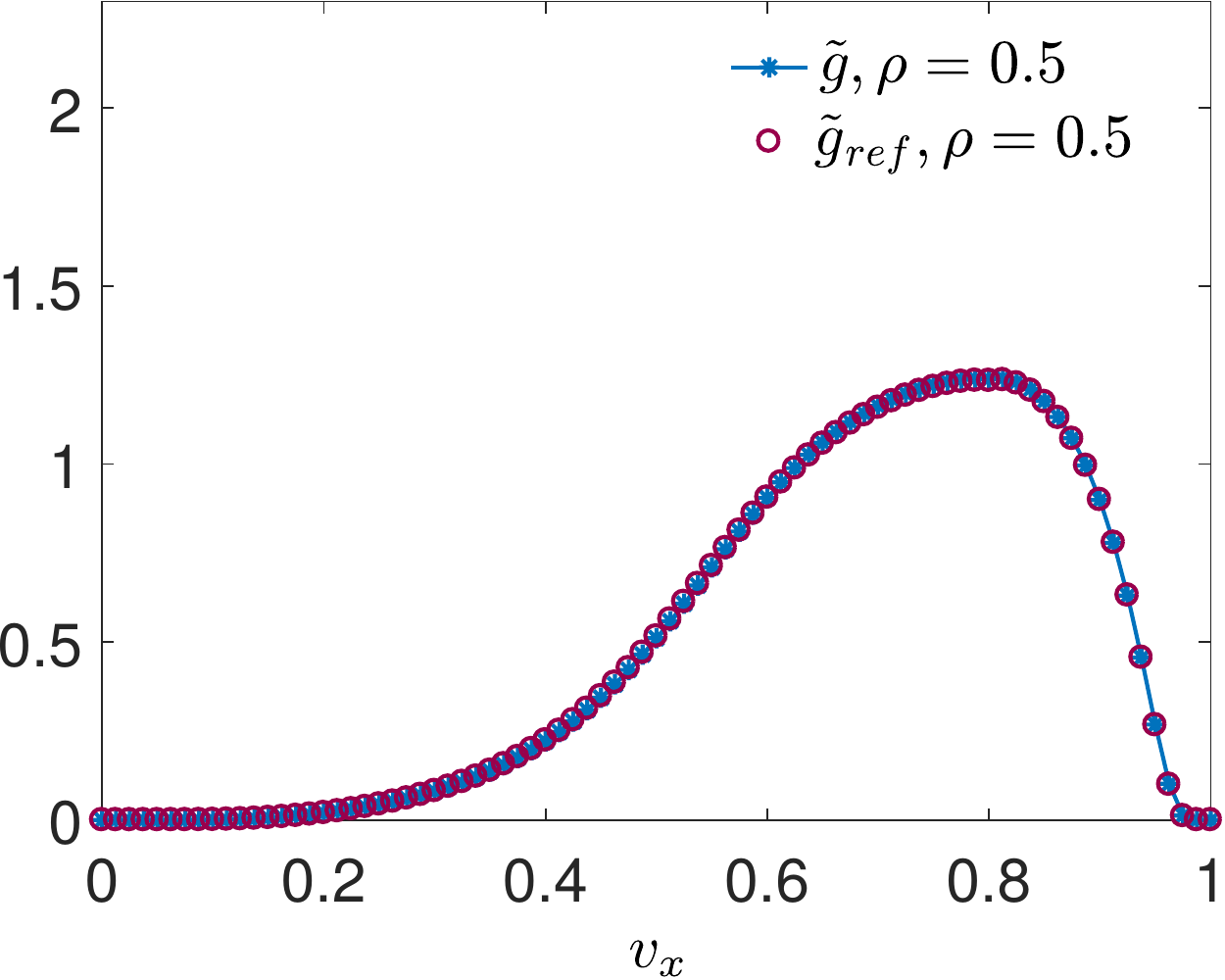}}
\subfigure[$\rho=0.7,\,\sigma^2=15$]{
\includegraphics[scale=0.32]{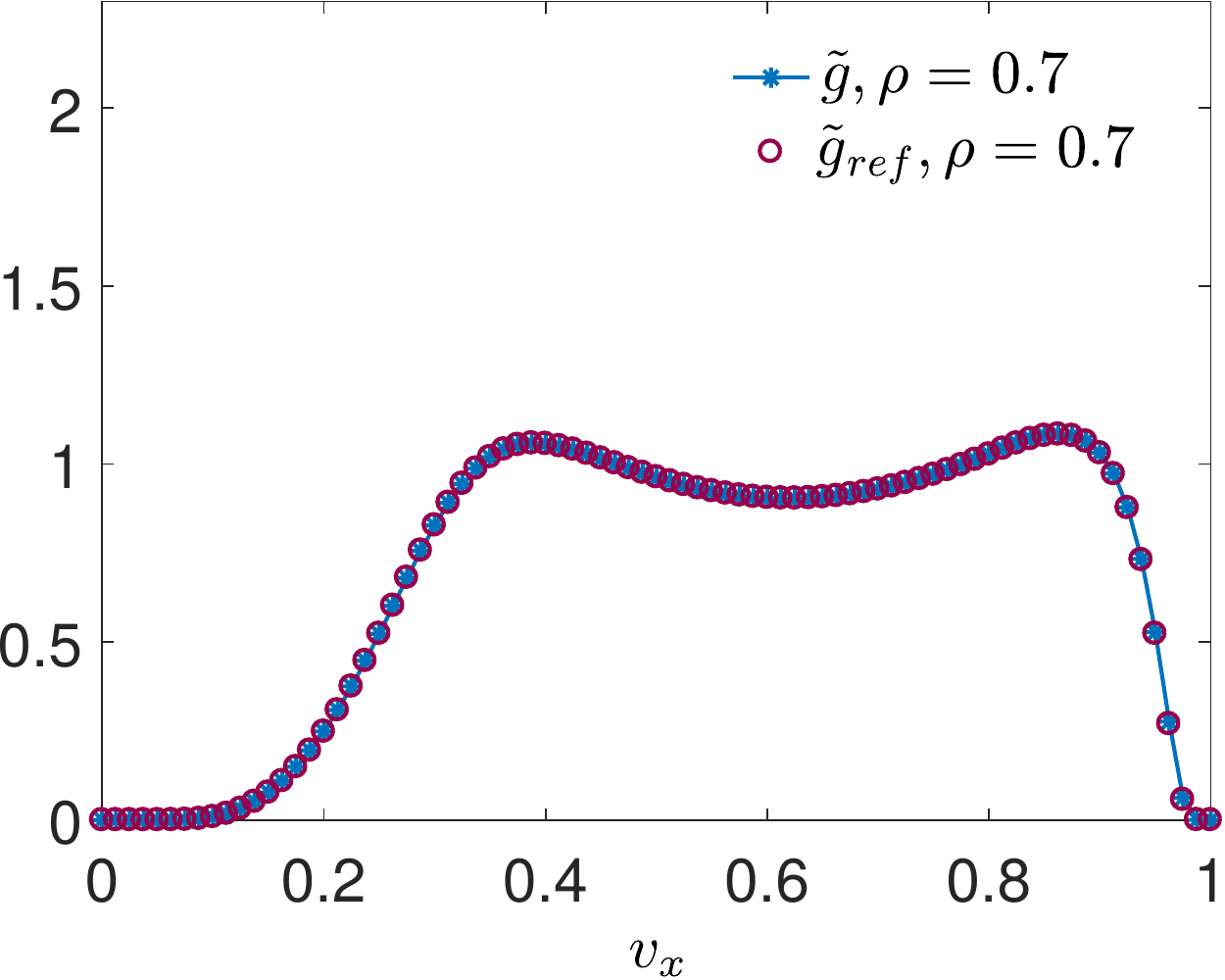}}
\caption{Asymptotic solutions of the Fokker-Planck equation in~\eqref{eq:split.FP} computed with the semi-implicit SP scheme for several values of $\rho$ and $\sigma^2$. The blue line is the solution computed with $N_x=41$ grid points while the red circles represent the reference solution computed with $N_x=321$ grid points.}
\label{fig:stat_prof}
\end{figure}

In order to show the effectiveness of the described scheme we now apply it to the nonlinear Fokker-Planck equation in~\eqref{eq:split.FP} with $W_x=w_x$ and $V_A$, $V_B$ like in~\eqref{eq:VA.VB}. Notice that, since in this illustrative example we are not interested in the coupling with~\eqref{eq:split.Boltz}, we can formally neglect the variables $v_y$, $\theta$, thereby reducing the problem to a one-dimensional equation in the sole variable $v_x$.

In Figure~\ref{fig:rel_errorFP} we present the time evolution, for $\tau\in [0,\,100]$, of the relative $L^1$ error on the numerical solution computed with $N_x=41$ grid points with respect to a reference solution computed with $N_x=321$ grid points. We use the semi-implicit scheme in time with $\Delta{\tau}= \Delta{v_x}/\sigma^2$ and we consider in particular the values $\rho=0.3,\,0.5,\,0.7$ of the vehicle density and $\sigma^2=10,\,15$ of the diffusion coefficient. In Figure~\ref{fig:stat_prof} we sketch the stationary profiles of the kinetic distribution for the same values of $\rho$ and $\sigma^2$.

Finally, in Table~\ref{tab:orderFP} we estimate the order of convergence of the semi-implicit SP scheme for $\rho=0.3,\,0.7$ and $\sigma^2=15$ as $\log_2\frac{e_1(\tau)}{e_2(\tau)}$, where $e_1(\tau)$ is the relative error at time $\tau$ of the solution computed with $N_x=21$ grid points with respect to that computed with $N_x=41$ grid points and, likewise, $e_2(\tau)$ is the relative error at time $\tau$ of the solution computed with $N_x=41$ grid points with respect to that computed with $N_x=81$ grid points. The time step is such that the CFL condition for the positivity of the scheme is satisfied, i.e. $\Delta{ \tau}=O(\Delta{v_x})$ in the semi-implicit case according to Proposition~\ref{prop:SI_dt}.

\begin{table}
\caption{Estimate of the order of convergence of the semi-implicit SP scheme for the Fokker-Planck equation in~\eqref{eq:split.FP} with $\sigma^2=15$ and $\Delta{\tau}=\Delta{v_x}/\sigma^2$.}
\begin{center}
\begin{tabular}{l|r@{.}l|r@{.}l|}
\cline{2-5}
& \multicolumn{2}{|c|}{$\rho=0.3$} & \multicolumn{2}{|c|}{$\rho=0.7$} \\
\hline
\multicolumn{1}{|l|}{$\tau=1$} & $1$ & $7543$ & $1$ & $7794$ \\
\multicolumn{1}{|l|}{$\tau=20$} & $1$ & $9524$ & $1$ & $7821$ \\
\multicolumn{1}{|l|}{$\tau=60$} & $2$ & $2934$ & $1$ & $9282$ \\
\multicolumn{1}{|l|}{$\tau=100$} & $2$ & $3014$ & $1$ & $9283$ \\
\hline                          
\end{tabular}
\end{center}
\label{tab:orderFP}
\end{table}

\subsection{The two-dimensional stochastic model}
We now turn to the numerical solution of the two-dimensional hybrid stochastic model~\eqref{eq:FP-Boltz} by means of the dimensional splitting illustrated at the beginning of Section~\ref{sec:num}. We consider as initial condition the following deterministic distribution:
$$ g(\vv,\,0;\,\theta)=g_0(\vv)=g_{0,x}(v_x)g_{0,y}(v_y), $$
which does not depend on the random parameter $\theta$, with in particular
$$ g_{0,x}(v_x)=\chi_{[0,\,1]}(v_x), \qquad g_{0,y}(v_y)=\frac{1}{2}\chi_{[-1,\,1]}(v_y). $$
Notice that the choice of $g_{0,y}$ implies that we are fixing $\Vy=[-1,\,1]$, i.e. $\e=1$.

As far as the random input $\theta$ is concerned, we choose $\theta\sim\mathcal{U}(-1,\,1)$, hence $\rangeth=[-1,\,1]$ and $h(\theta)=\frac{1}{2}\chi_{[-1,\,1]}(\theta)$, and a desired speed in~\eqref{eq:binary_y} of the form
\begin{equation}
	v_d(\theta)=\bar{v}_d+\lambda P(\rho)\theta,
	\label{eq:vd}
\end{equation}
where $\bar{v}_d\in (-1,\,1)$ and $\lambda>0$ are given constants. For the application of the stochastic collocation method we discretise $\theta$ by means of the first $M>1$ Gauss-Legendre collocation nodes.

Finally, we solve the Boltzmann step~\eqref{eq:split.Boltz} via a direct Monte Carlo method implemented through the Nanbu algorithm~\cite{bobylev2000PRE}. Precisely, we use $N=10^4$ $y$-speeds extracted from the distribution $\tilde{g}$ with a stratified sampling approach~\cite{pareschi2005ESAIMP} and we approximate the collisional equation in~\eqref{eq:split.Boltz} as
$$ g^{n+1}=(1-\mu\rho\Delta{\tau})g^{n+1/2}+\mu\rho\Delta{\tau}Q^+_y(g^{n+1/2}), $$
where $Q^+_y(g):=\frac{1}{1-\beta(u_x)}g('\vv_y,\,\tau;\,\theta)$ is the \emph{gain} term of the collision operator $Q_y$. Observing that $Q^+_y(g)$ is a probability distribution, we see that under the restriction $\mu\rho\Delta{\tau}\leq 1$ the previous equation is a convex combination of the probability distributions $g^{n+1/2}$ and $Q^+_y(g^{n+1/2})$. Hence, in order to obtain new samples distributed according to $g^{n+1}$ we have to sample either from $g^{n+1/2}$, with probability $1-\mu\rho\Delta{\tau}$, or from $Q^+_y(g^{n+1/2})$, with probability $\mu\rho\Delta{\tau}$. As a matter of fact, sampling from $g^{n+1/2}$ corresponds to the event that no $y$-interactions take place during the time step $\Delta{\tau}$ whereas sampling from $Q^+_y(g^{n+1/2})$ corresponds to the event that $y$-interactions occur during the time step $\Delta{\tau}$. In particular, we fix $\mu:=\frac{1}{\rho\Delta{\tau}}$ so that at each time step we do have interactions in the $y$-direction.

\bigskip

\begin{figure}[!t]
\centering
\includegraphics[scale=0.5]{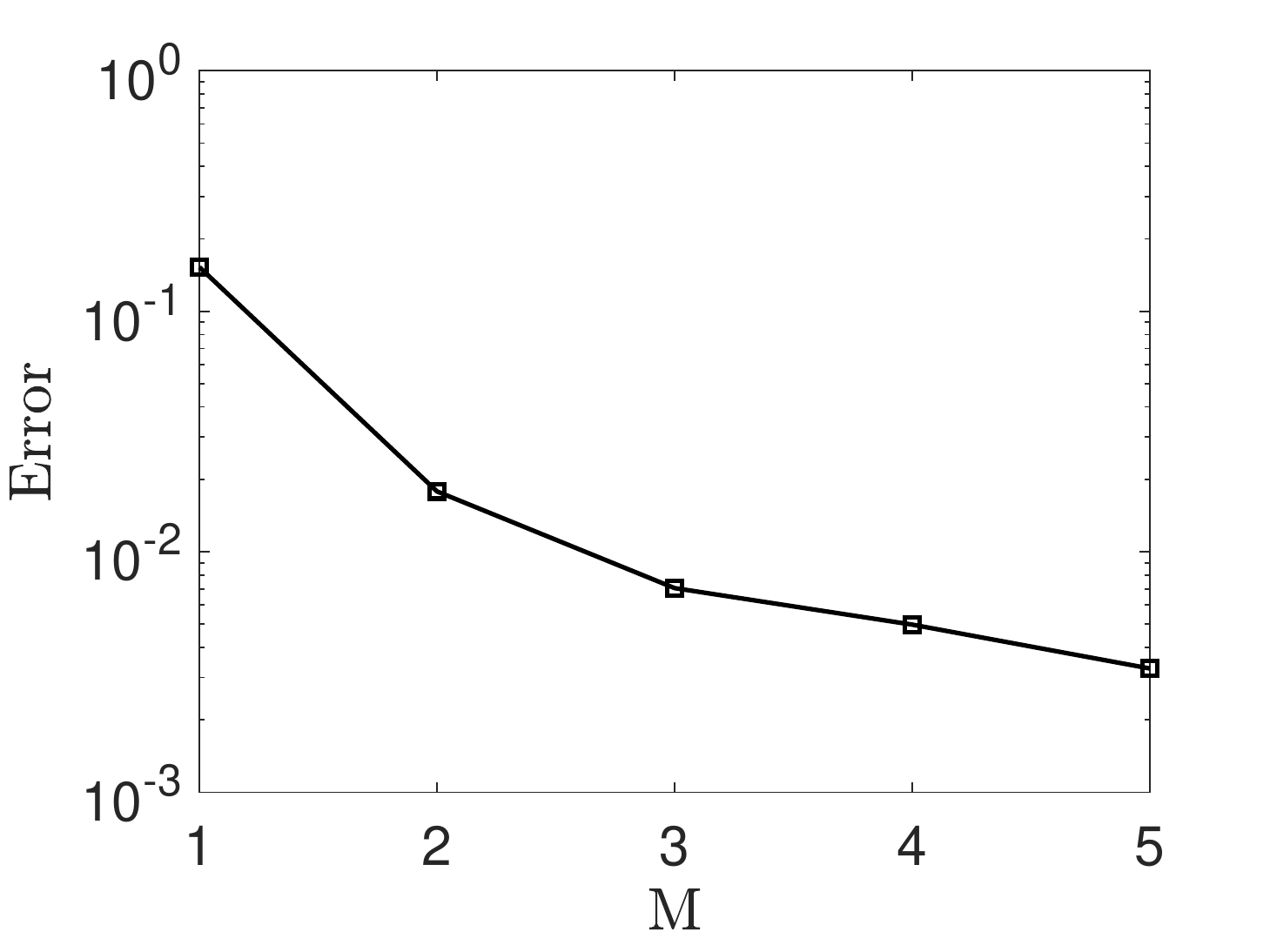}
\includegraphics[scale=0.5]{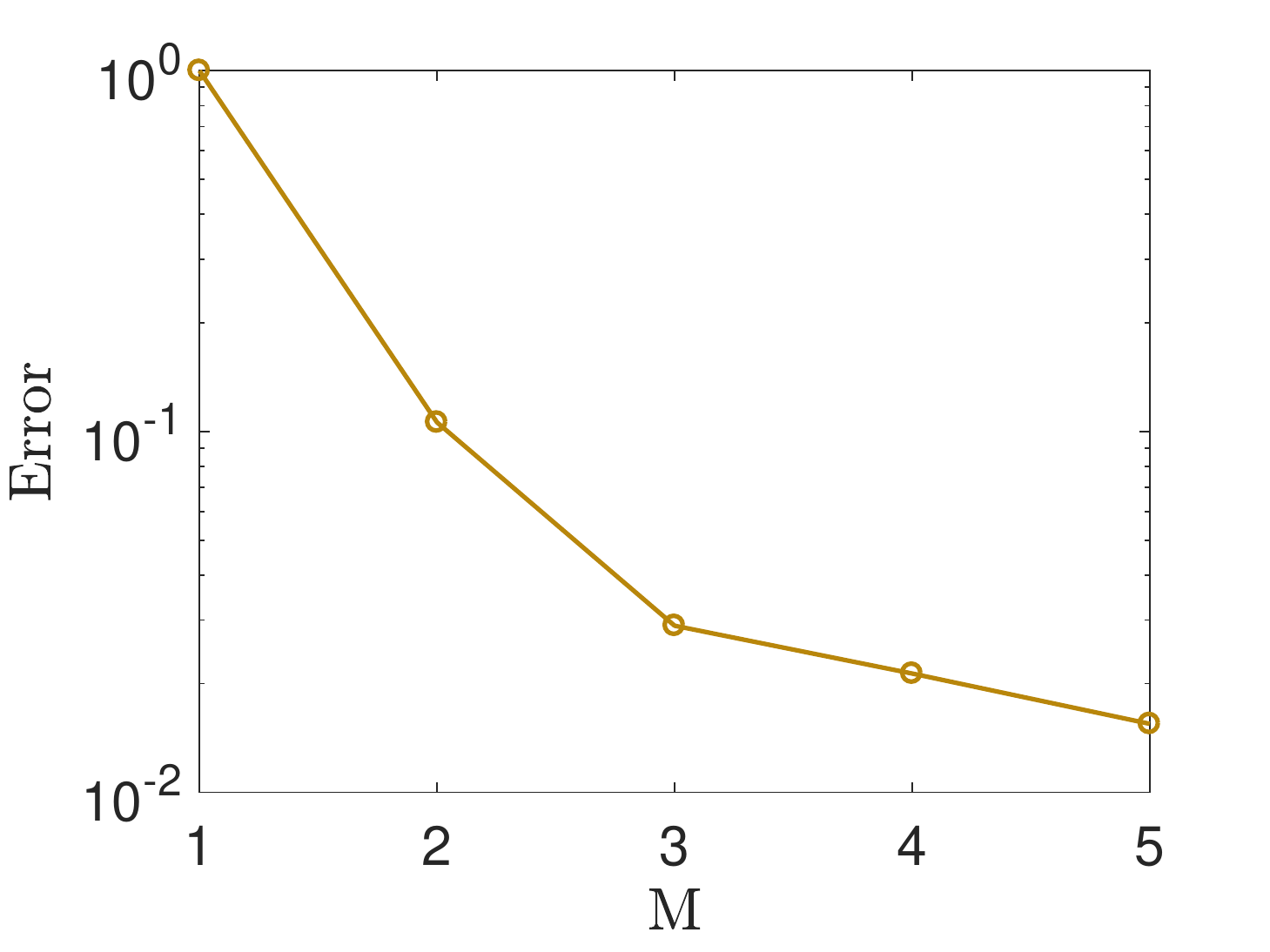}
\caption{Relative $L^1$ error on the $\theta$-expected solution (left) and its $\theta$-variance (right) of~\eqref{eq:FP-Boltz} via the dimensional splitting~\eqref{eq:split.FP}-\eqref{eq:split.Boltz} for an increasing number $M=1,\,\dots,\,5$ of collocation nodes in $\rangeth$. The reference numerical solution is computed with $M=50$ collocation nodes. In~\eqref{eq:vd} we have fixed $\bar{v}_d=0$ and $\lambda=10^{-1}$.}
\label{fig:expcon}
\end{figure}

In Figure~\ref{fig:expcon} we show the relative $L^1$ error on the expected solution $\bar{g}$ and its variance $\Var_\theta(g)$, cf.~\eqref{eq:fbar.var}, of the full two-dimensional problem for an increasing number $M=1,\,\dots,\,5$ of collocation nodes in $\rangeth$, computed with respect to a reference numerical solution on $M=50$ nodes. The variable $v_x$ has been discretised with $N_x=101$ grid points in $\Vx$, the variable $v_y$ with $N_y=41$ grid points in $\Vy$ and a time step $\Delta{\tau}=O(\Delta{v_x})$ has been chosen for the semi-implicit SP scheme, cf. Proposition~\ref{prop:SI_dt}, with final time $T=100$. We point out that, due to the stratified sampling approach, the curves plotted in Figure~\ref{fig:expcon} are actually averages computed out of $10^2$ estimates of the relative error.

\begin{figure}[!t]
\centering
\subfigure[$\rho=0.3, \tau=1$]{\includegraphics[scale=0.3]{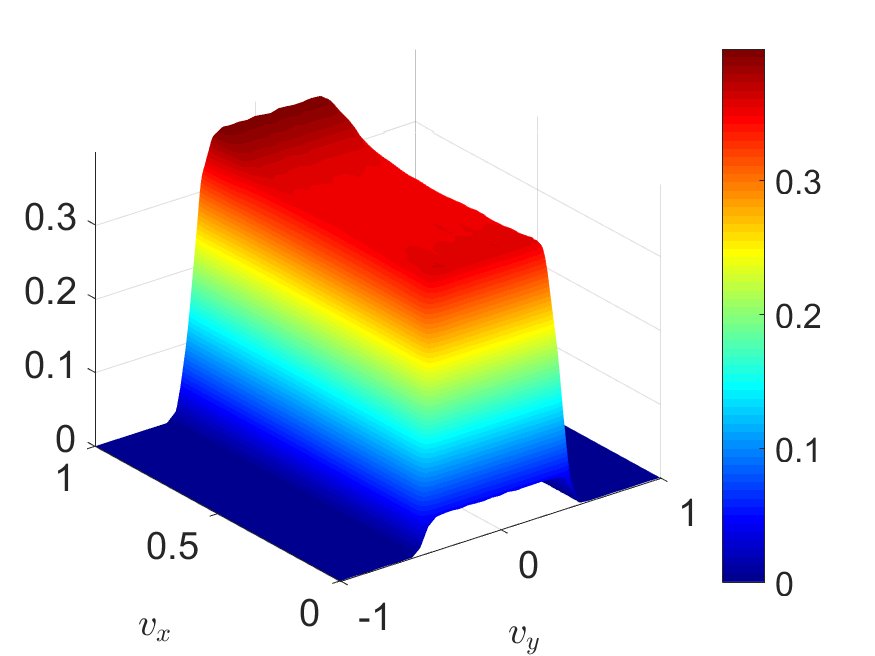}}
\subfigure[$\rho=0.3, \tau=5$]{\includegraphics[scale=0.3]{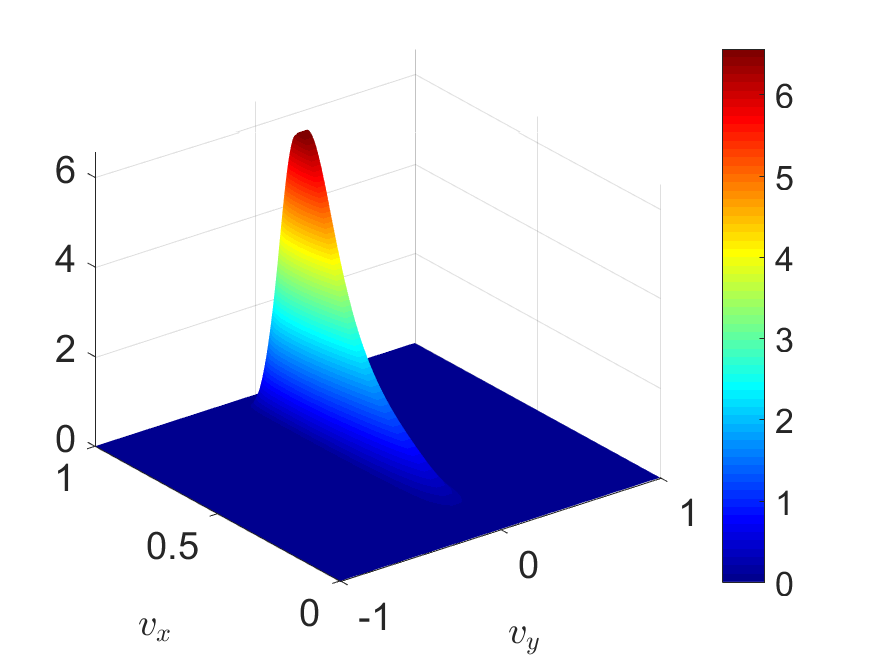}}
\subfigure[$\rho=0.3, \tau=50$]{\includegraphics[scale=0.3]{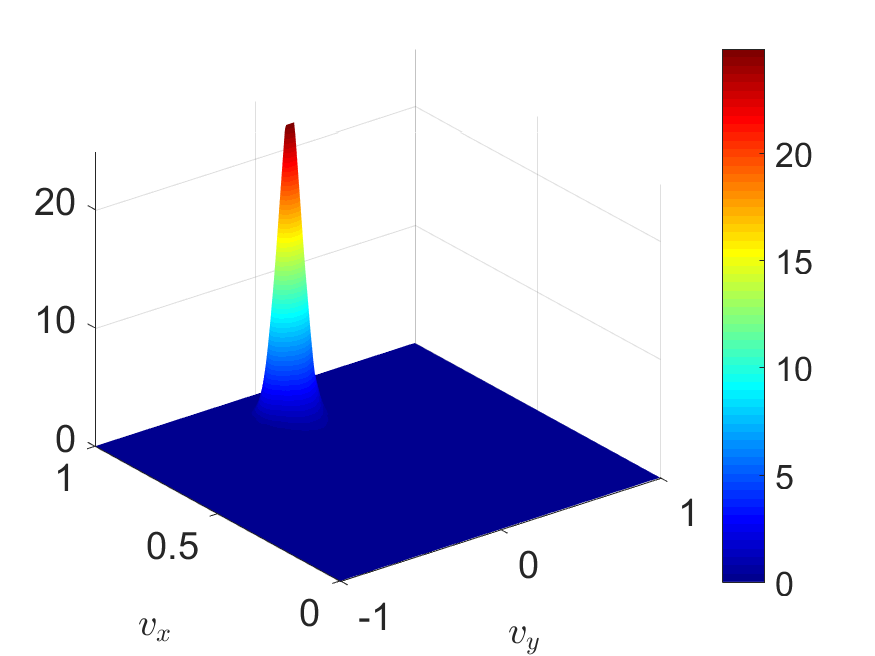}}	\\
\subfigure[$\rho=0.6, \tau=1$]{\includegraphics[scale=0.3]{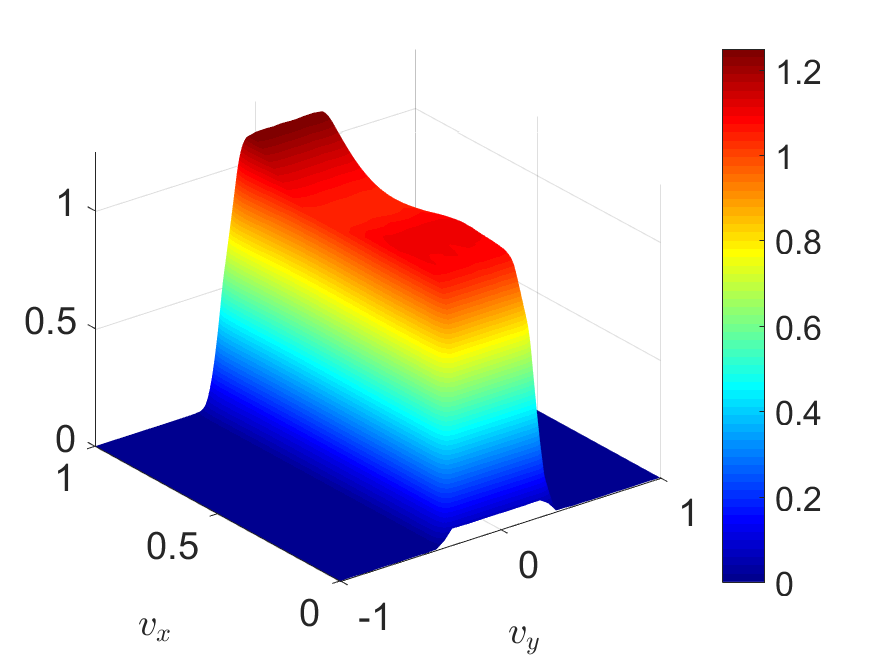}}
\subfigure[$\rho=0.6, \tau=5$]{\includegraphics[scale=0.3]{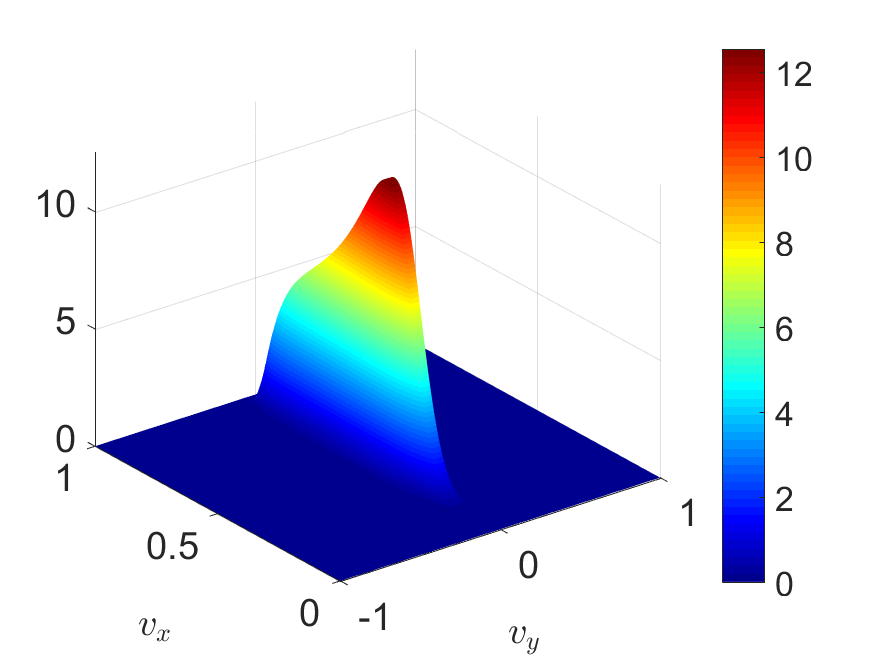}}
\subfigure[$\rho=0.6, \tau=50$]{\includegraphics[scale=0.3]{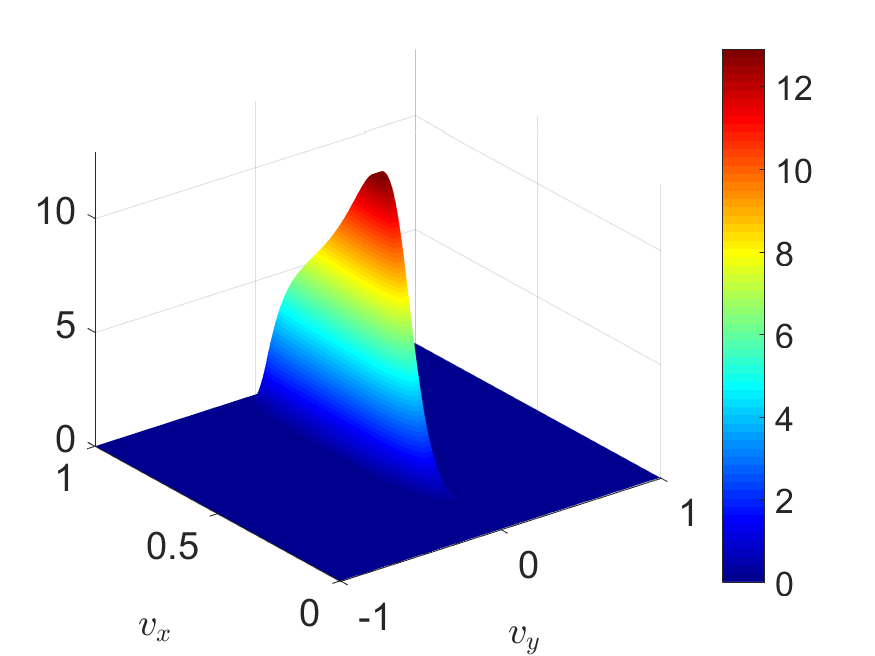}}\\
\subfigure[$\rho=0.9, \tau=1$]{\includegraphics[scale=0.3]{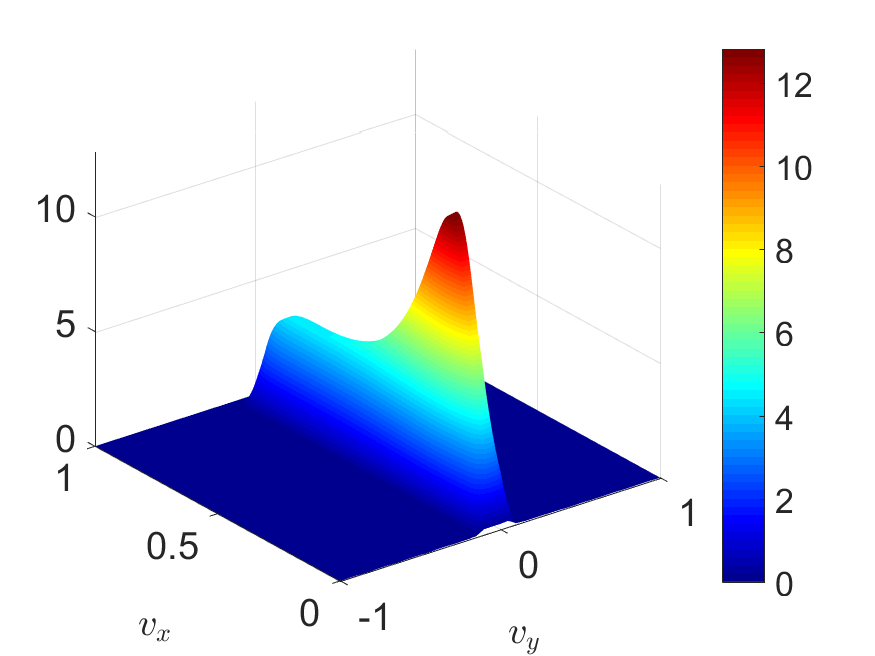}}
\subfigure[$\rho=0.9, \tau=5$]{\includegraphics[scale=0.3]{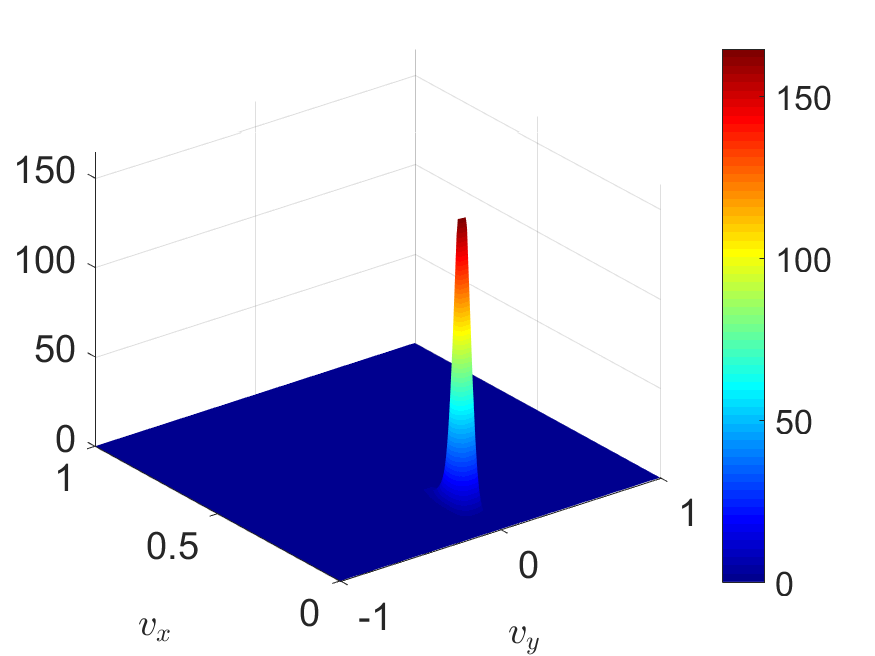}}
\subfigure[$\rho=0.9, \tau=50$]{\includegraphics[scale=0.3]{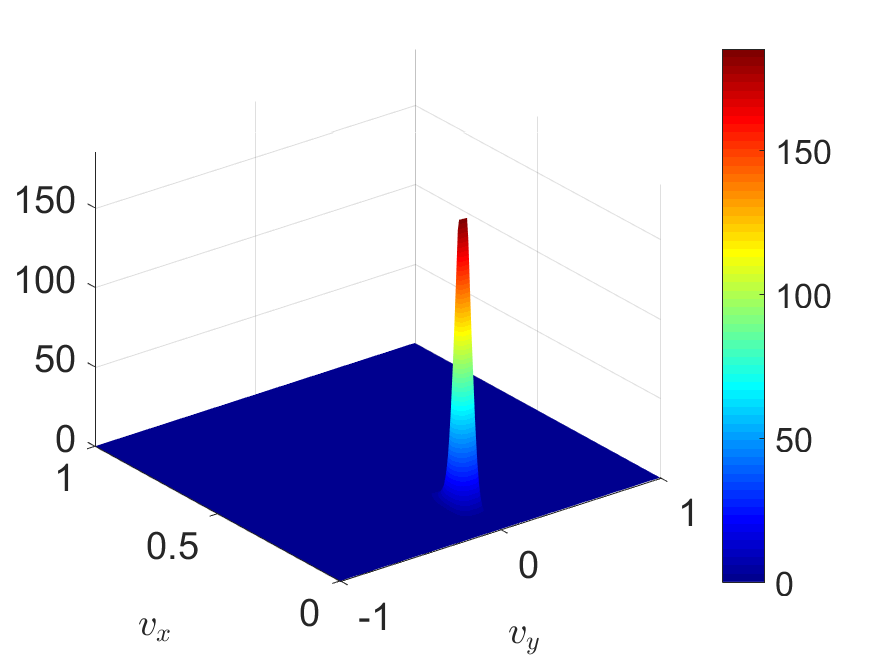}}
\caption{Evolution of the $\theta$-expected solution of~\eqref{eq:FP-Boltz} for different values of the traffic density, in particular $\rho=0.3$ (top row), $\rho=0.6$ (middle row) and $\rho=0.9$ (bottom row).}
\label{fig:exp_2D}
\end{figure}

\begin{figure}[!t]
\centering
\subfigure[$\rho=0.3, \tau=1$]{\includegraphics[scale=0.3]{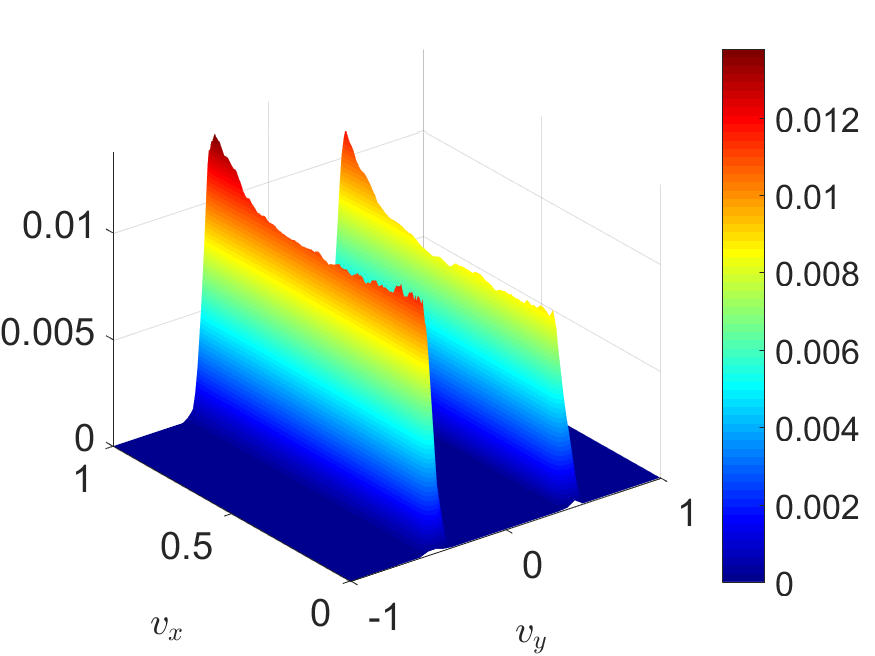}}
\subfigure[$\rho=0.3, \tau=5$]{\includegraphics[scale=0.3]{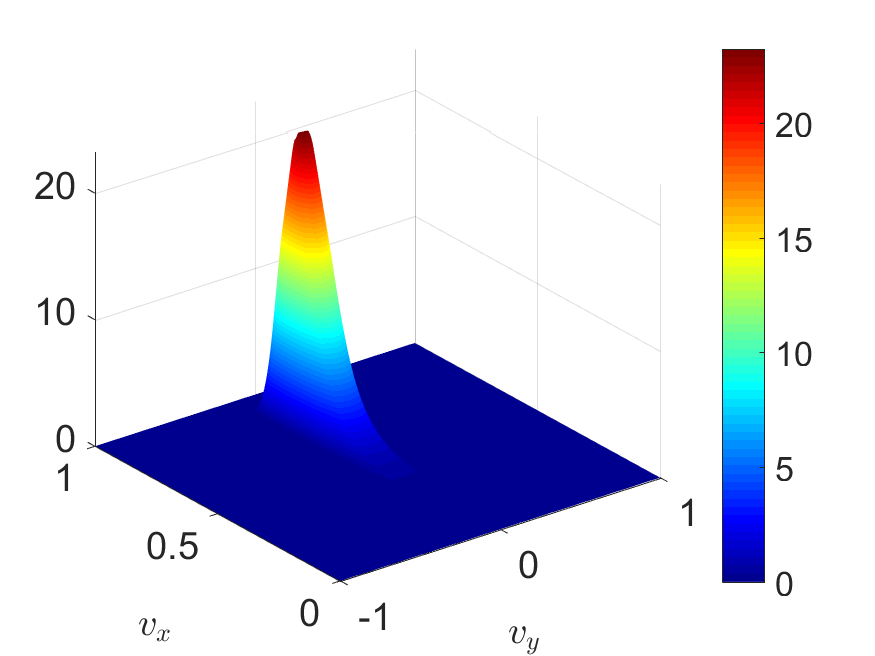}}
\subfigure[$\rho=0.3, \tau=50$]{\includegraphics[scale=0.3]{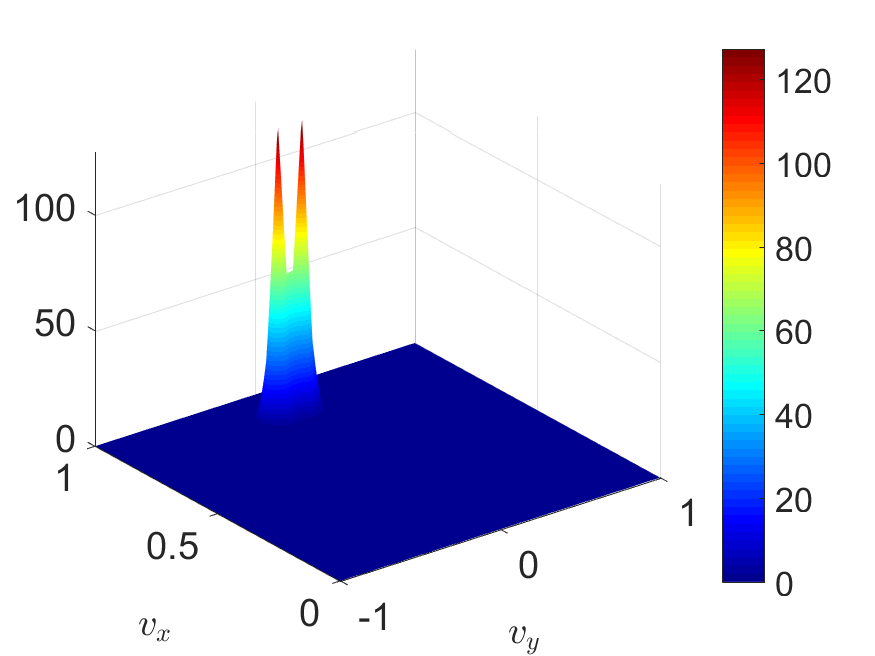}}	\\
\subfigure[$\rho=0.6, \tau=1$]{\includegraphics[scale=0.3]{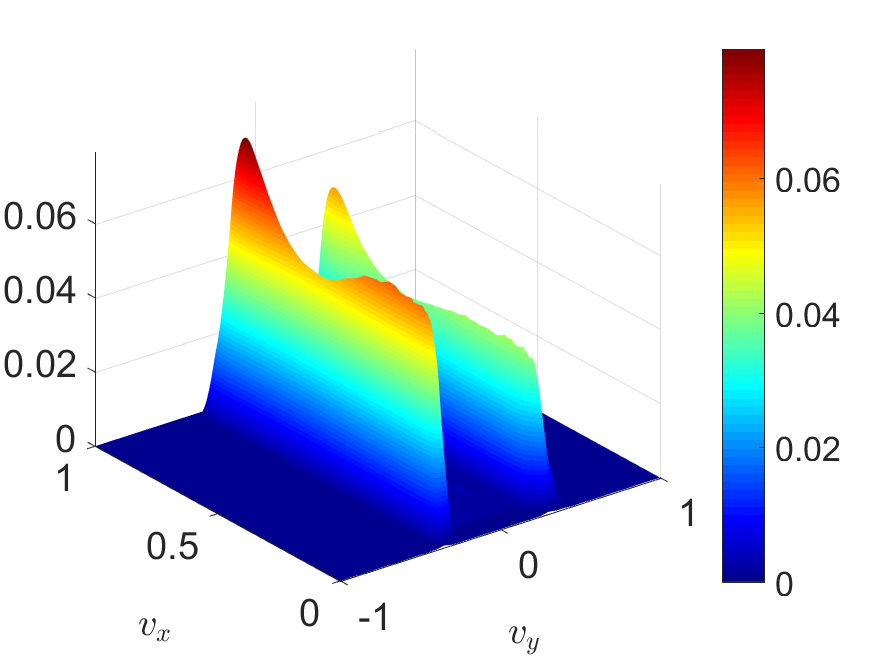}}
\subfigure[$\rho=0.6, \tau=5$]{\includegraphics[scale=0.3]{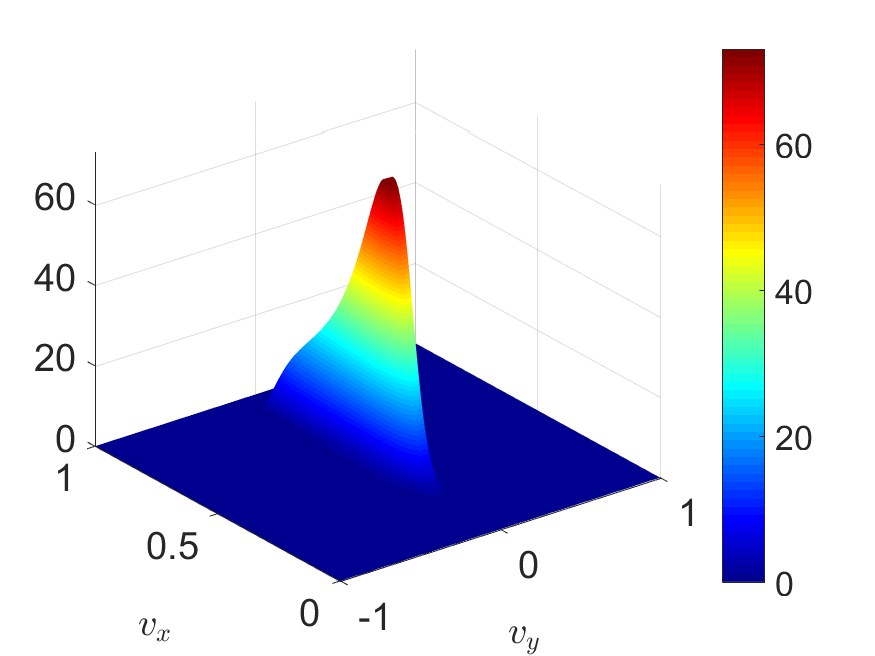}}
\subfigure[$\rho=0.6, \tau=50$]{\includegraphics[scale=0.3]{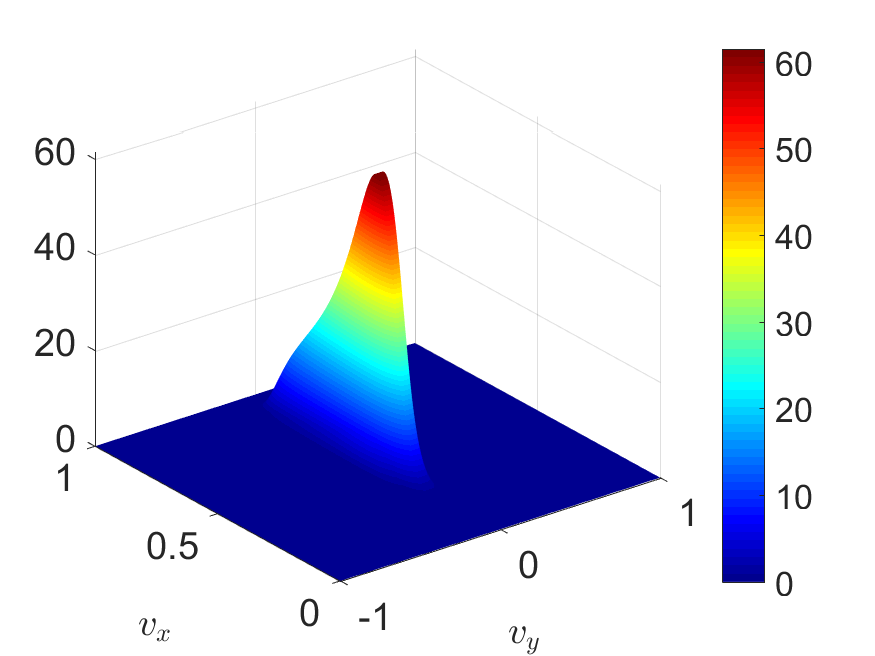}}\\
\subfigure[$\rho=0.9, \tau=1$]{\includegraphics[scale=0.3]{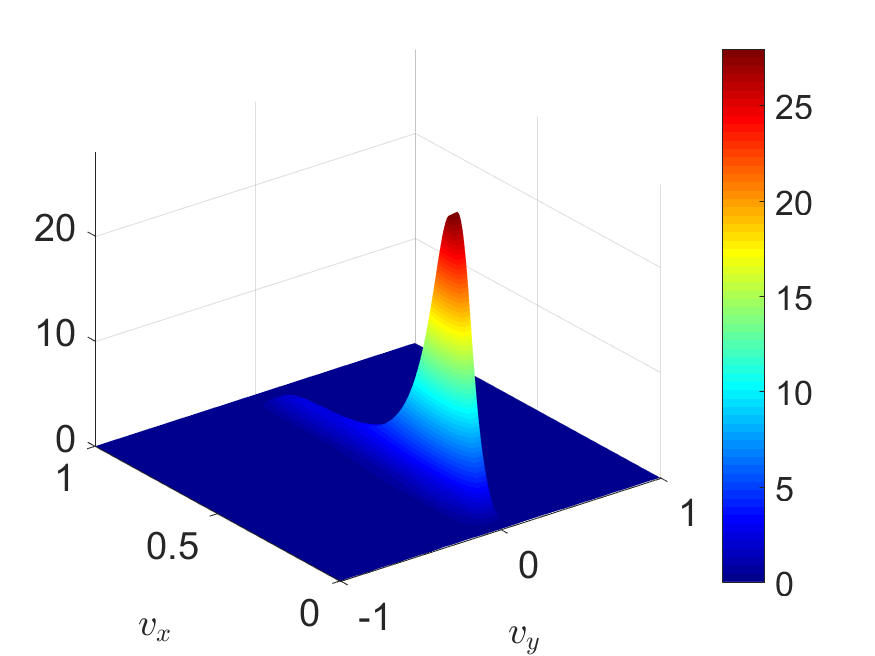}}
\subfigure[$\rho=0.9, \tau=5$]{\includegraphics[scale=0.3]{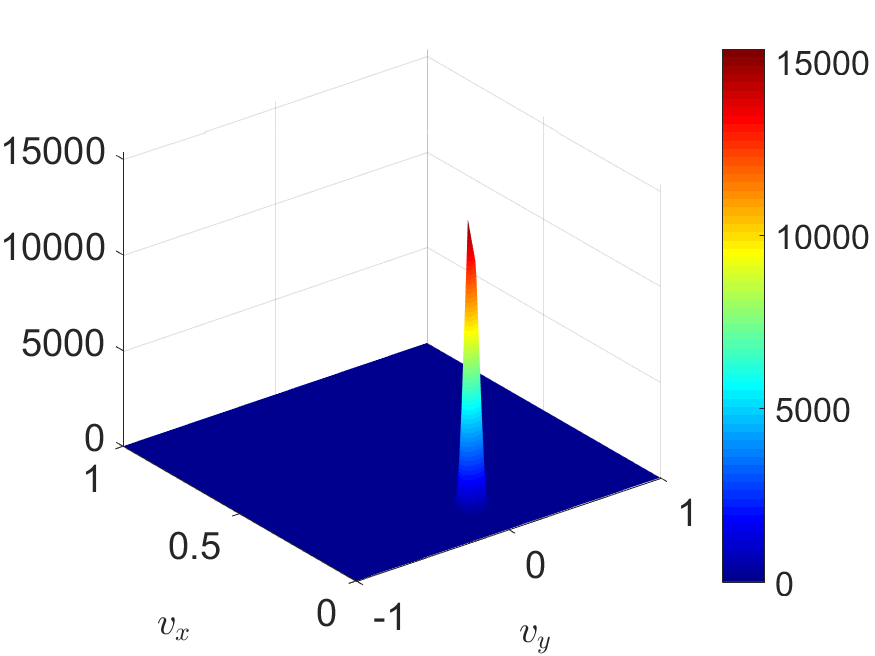}}
\subfigure[$\rho=0.9, \tau=50$]{\includegraphics[scale=0.3]{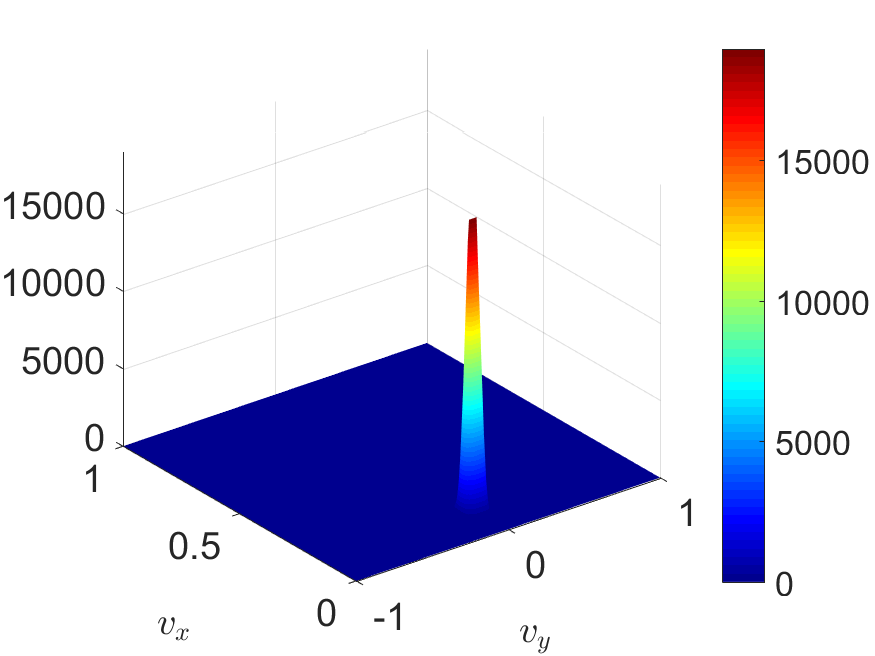}}
\caption{Evolution of the $\theta$-variance of the expected solution to~\eqref{eq:FP-Boltz} for different values of the traffic density, in particular $\rho=0.3$ (top row), $\rho=0.6$ (middle row) and $\rho=0.9$ (bottom row).}
\label{fig:var_2D}
\end{figure}

In Figures~\ref{fig:exp_2D},~\ref{fig:var_2D} we show instead the time evolution of the expected solution $\bar{g}$ and of its variance $\Var_\theta(g)$, respectively, cf. again~\eqref{eq:fbar.var}, for the traffic densities $\rho=0.3,\,0.6,\,0.9$. In particular, the graphs of $\Var_\theta(g)$ highlight the regions of the space of the microscopic states $\VV=\Vx\times\Vy$ where the statistical variability of the expected solution $\bar{g}$ is higher due to the uncertainty in $v_d(\theta)$.

\subsection{Two-dimensional speed diagrams of traffic}
\label{sec:funddiag}
A usual benchmark for validating a kinetic traffic model consists in checking if the theoretical speed diagrams arising from the asymptotic kinetic distributions reproduce the features typically observed in the empirical speed diagrams.

Speed diagrams express the average speed of the vehicles as a function of the vehicle density at equilibrium and in spatially homogeneous conditions. In our case, we compute the mean speeds at equilibrium from the asymptotic solution of~\eqref{eq:FP-Boltz} as
\begin{align*}
	u^\infty_x &= \int_{\VV}v_xg^\infty(\vv;\,\theta)\,d\vv=\int_0^1v_xg^\infty_x(v_x)\,dv_x, \\
	u^\infty_y(\theta) &= \int_{\VV}v_yg^\infty(\vv;\,\theta)\,d\vv=\int_{-\e}^\e v_yg^\infty_y(v_y;\,\theta)\,dv_y.
\end{align*}
Notice that only the asymptotic $y$-mean speed is actually uncertain, because the $y$-dynamics~\eqref{eq:binary_y} contain the random input $\theta$. The asymptotic $x$-mean speed is not, because the $x$-dynamics~\eqref{eq:binary_x-v}-\eqref{eq:binary_x-w} do not contain any random input nor any explicit coupling with the $y$-dynamics. By further averaging $u^\infty_y$ with respect to the uncertainty in $\theta$ we get:
$$ \bar{u}^\infty_y:=\int_{\rangeth}u^\infty_y(\theta)h(\theta)\,d\theta, $$
while obviously it results $\bar{u}^\infty_x=u^\infty_x$. Clearly, these values depend on the density $\rho\in [0,\,1]$ fixed in~\eqref{eq:LD}, hence in~\eqref{eq:LDcal}, and in~\eqref{eq:FP-Boltz}. Finally, we plot the mappings $\rho\mapsto\bar{u}^\infty_y$, $\rho\mapsto u^\infty_x$ and compare them with the empirical ones obtained from a dataset described below, cf. Section~\ref{sec:dataset}.

In order to reproduce the scattering of the experimental data normally seen in the measured speed diagrams, we compute the following indicator of maximum dispersion of the $y$-energy:
$$ I_y:=\bar{E}^\infty_y+\sqrt{\Var_\theta(E^\infty_y)}, $$
where $\bar{E}^\infty_y$ denotes the $\theta$-expected value of the asymptotic energy in the $y$-direction and $\Var_\theta(E^\infty_y)$ its $\theta$-variance, then we plot the mappings $\rho\mapsto\bar{u}^\infty_y\pm\sqrt{I_y}$. Notice that $I_y$ represents the deviation from the expected $y$-energy of the model, hence its square root is dimensionally comparable to a speed.

\begin{figure}[t!]
\centering
\subfigure[]{\includegraphics[width=0.49\textwidth]{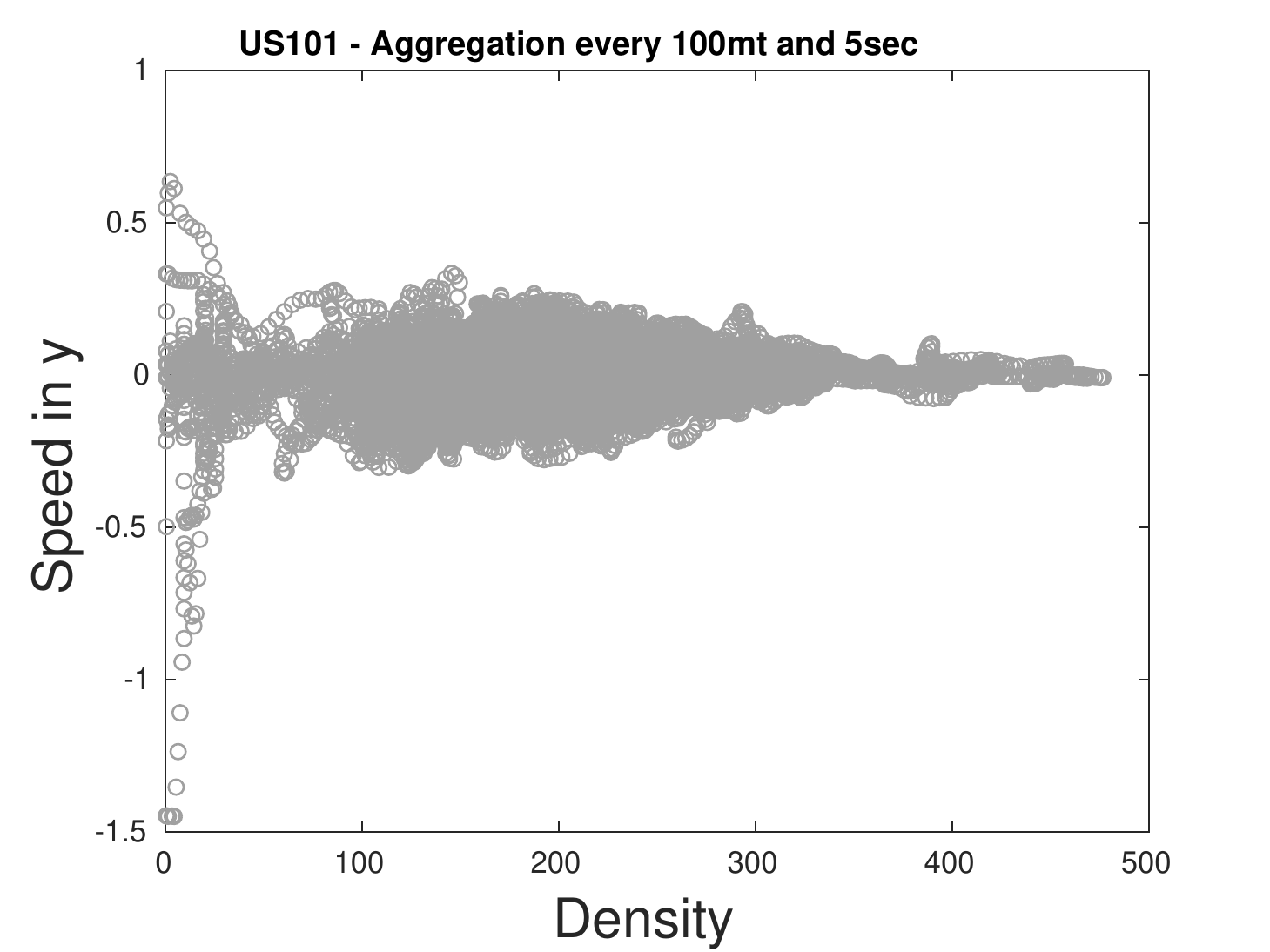}}
\subfigure[]{\includegraphics[width=0.49\textwidth]{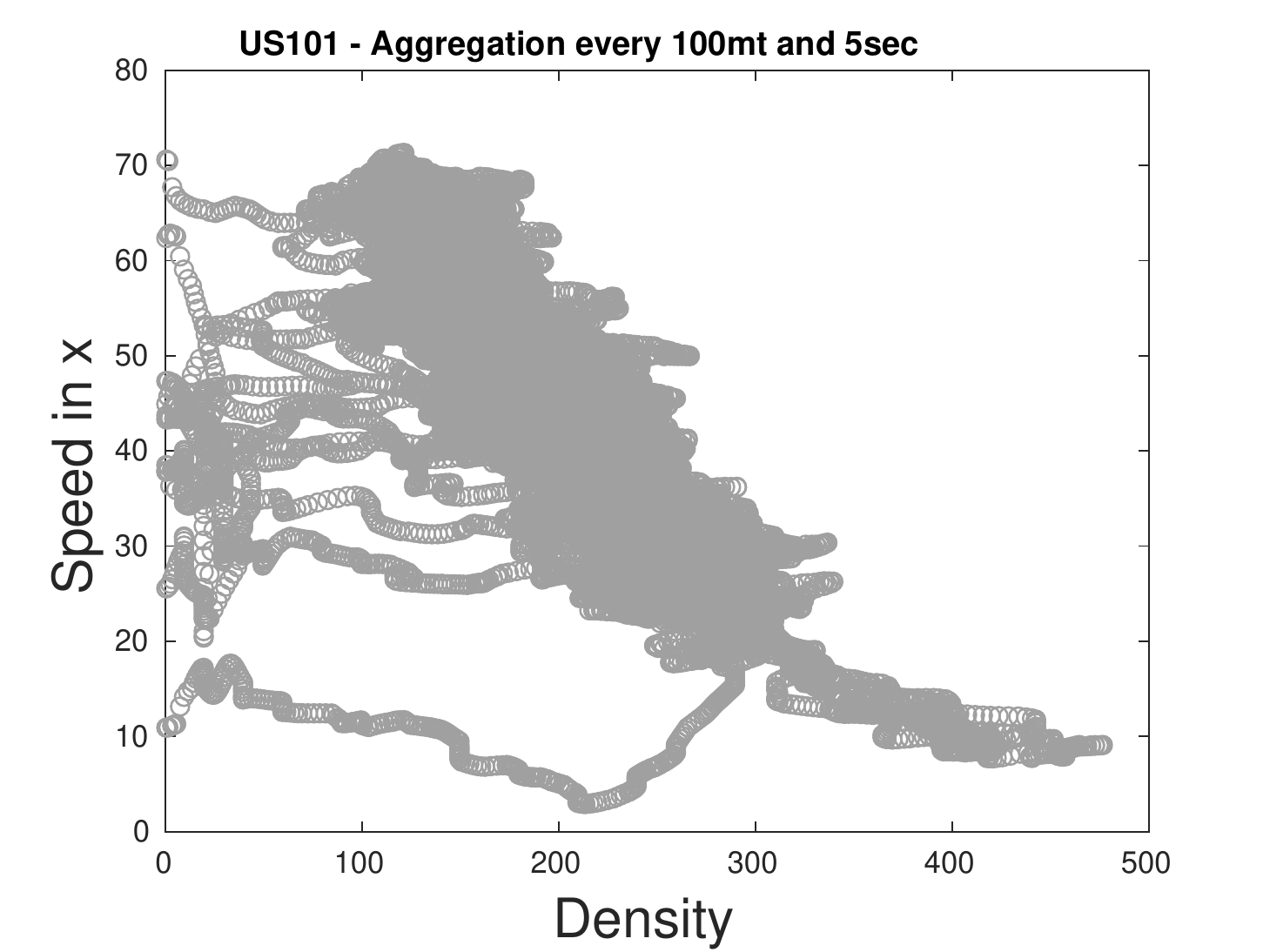}}
\caption{Speed-density diagrams in the $y$-direction (a) and $x$-direction (b). The grey circles are the experimental speeds obtained from the U.S. dataset described in Section~\ref{sec:dataset}.}
\label{fig:expdiagrams}
\end{figure}

\subsubsection{Experimental dataset}
\label{sec:dataset}
We consider a set of experimental data recorded on a section on the southbound direction of the U.S. Highway $101$ (known as Hollywood Freeway) in Los Angeles, California. The data are part of the Federal Highway Administration's (FHWA) Next Generation Simulation (NGSIM) project~\cite{NGSIM}. They consist of the two-dimensional vehicle trajectories collected between 7.50 am and 8.15 am on 15th June 2005 using $8$ video cameras with resolution of $10$ frames per second. The road section is approximately $640~\text{m}$ in length with five main lanes plus an auxiliary lane in the corridor between an incoming and an outgoing ramp. However, we only consider the stretch as if there were no ramps.

The microscopic velocities of the vehicles are recovered out of their microscopic positions in consecutive frames. From the microscopic data, the macroscopic quantities in each direction of the flow can be computed as explained e.g. in~\cite{hoogendoorn2007LN,maerivoet2005TECHREP}. The aggregation of the data is made with respect to time ($5~\text{s}$) and distance ($100~\text{m}$).

In Figure~\ref{fig:expdiagrams} we observe that the order of magnitude of the recorded mean speed in the $y$-direction is much smaller than that in the $x$-direction, which indeed justifies the formulation of a hybrid kinetic model to clearly separate the two speed scales. We stress that, to our knowledge, this is one of the first times that speed diagrams are recorded also for lateral displacements of the cars across the lanes.

\begin{figure}[t!]
\centering
\subfigure[]{\includegraphics[width=0.49\textwidth]{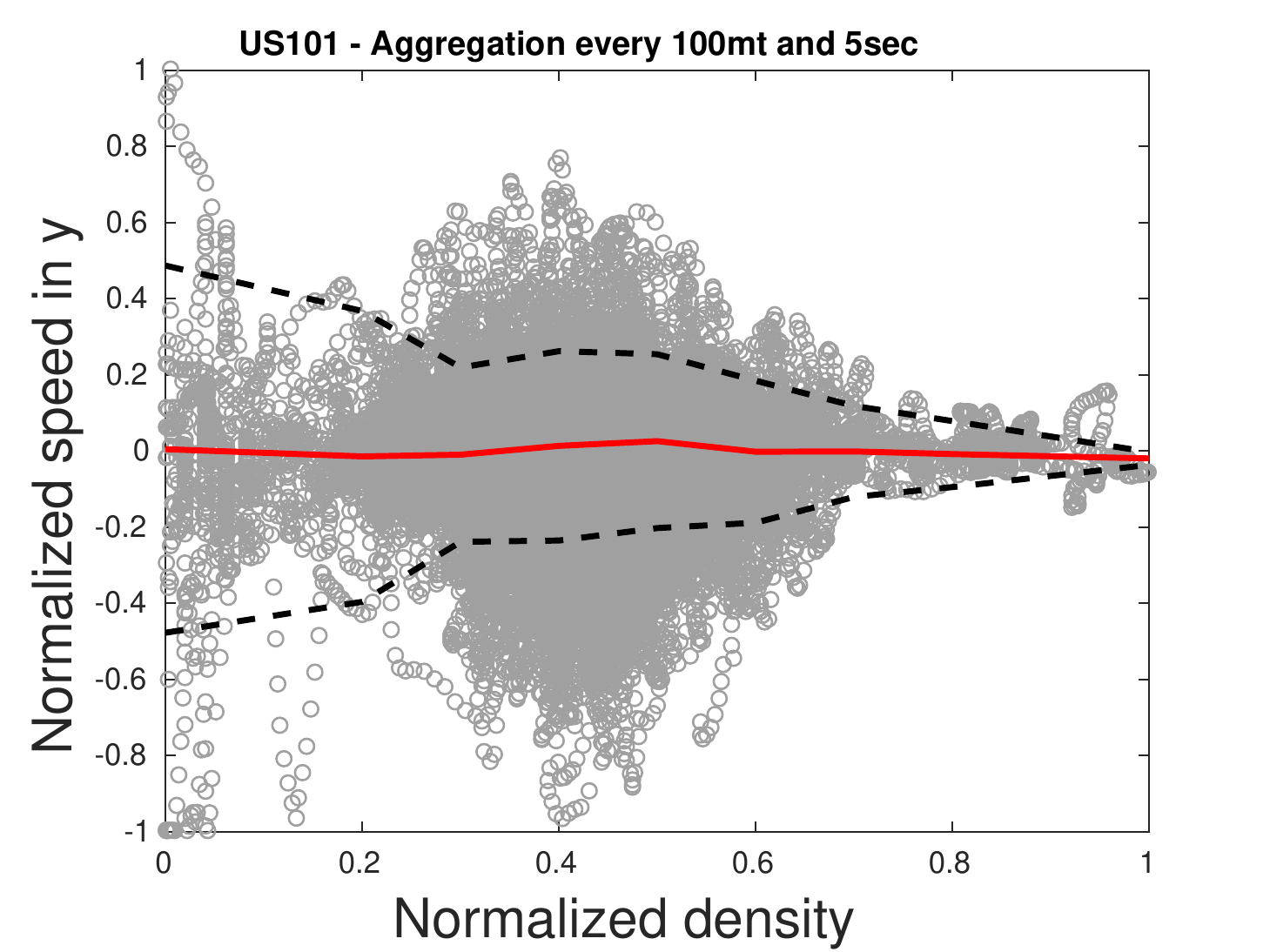}}
\subfigure[]{\includegraphics[width=0.49\textwidth]{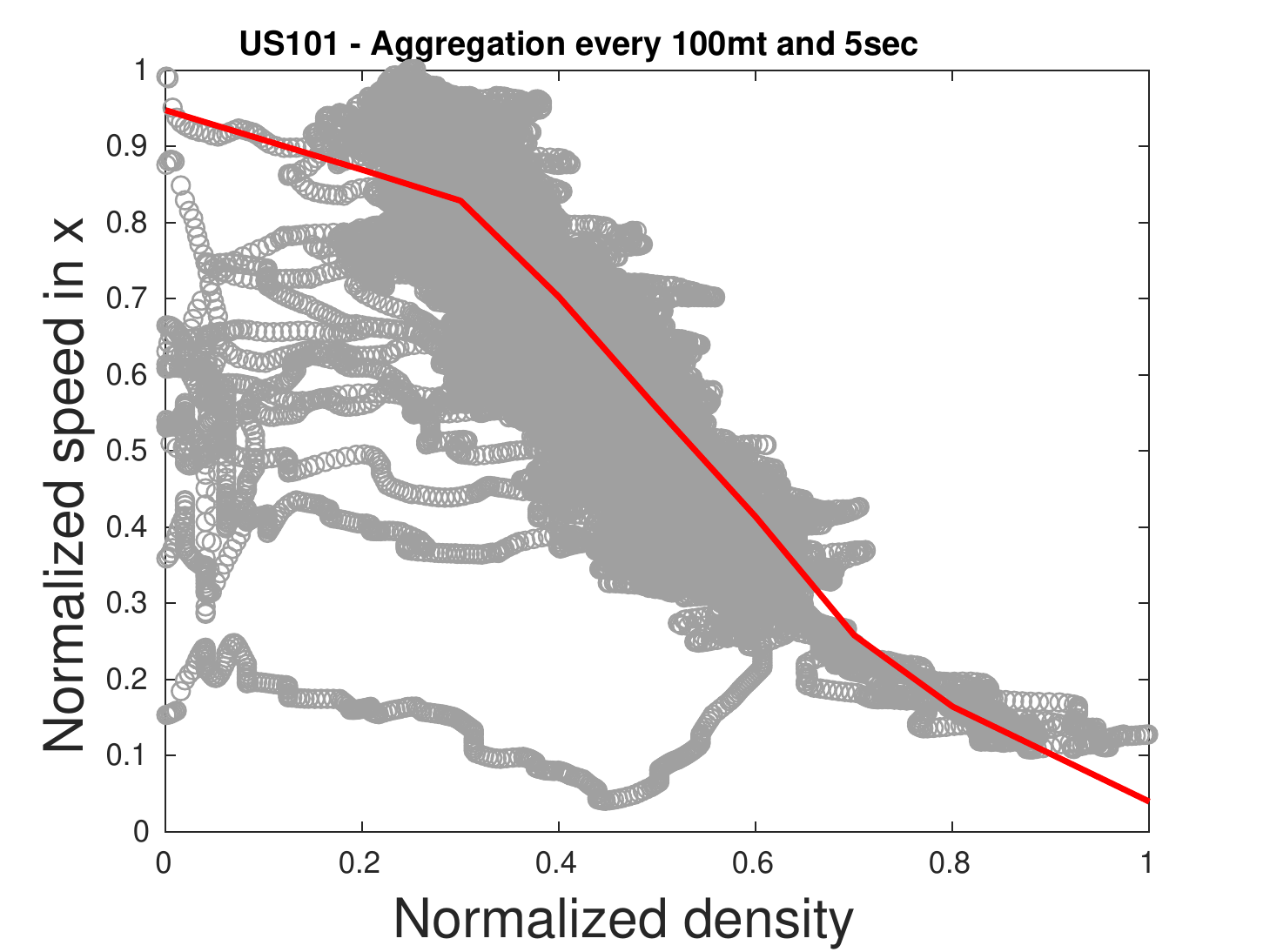}}
\caption{Theoretical speed-density diagrams (red, black lines) in the $y$-direction (a) and the $x$-direction (b) on top of the normalised empirical ones (grey circles). The red solid lines are the $\theta$-expected asymptotic mean speeds in the two directions of the flow while the black dashed lines in (a) are the deviations from the mean speed $\bar{u}^\infty_y$ given by the energy-based estimator $I_y$. In~\eqref{eq:P} we have considered $\delta=1$ while in~\eqref{eq:vd} we have set $\bar{v}_d=-0.0109$ and $\lambda=\frac{1}{2}$.}
\label{fig:diagrams}
\end{figure}

\subsubsection{Theoretical speed diagrams}
\label{sec:fund_teo}
In Figure~\ref{fig:diagrams} we show the theoretical speed-density diagrams, computed as discussed at the beginning of Section~\ref{sec:funddiag}, in both the $y$-direction (a) and the $x$-direction (b). For straightforward comparison, we place them on top of the empirical data (grey circles), which here are duly normalised with respect to the maximum density and the maximum speed in either direction of the flow for consistency with the dimensionless results of the mathematical model.

The red solid lines are the graphs of the mappings $\rho\mapsto\bar{u}^\infty_y$ in Figure~\ref{fig:diagrams}a and $\rho\mapsto u^\infty_x$ in Figure~\ref{fig:diagrams}b. The black dashed lines in Figure~\ref{fig:diagrams}a are instead the graphs of the mappings $\rho\mapsto\bar{u}^\infty_y\pm\sqrt{I_y}$. On the whole, we see that the theoretical results reproduce quite well the measurements. In particular, we notice that the theoretical speed diagram in the $y$-direction is constant at all densities around the value $\bar{v}_d$ in~\eqref{eq:vd}, that here we set to $\bar{v}_d=-0.0109$ (estimated from the data). In contrast, the theoretical speed diagram in the $x$-direction shows the typical decreasing trend towards zero at high density (congested traffic phase) after a nearly constant trend for low density (free traffic phase).

As far as the data scattering is concerned, we observe that the energy-based confidence interval around $\bar{u}^\infty_y$ estimated by means of $I_y$, cf. Figure~\ref{fig:diagrams}a, encompasses most of the empirical speed values, thus suggesting that the data dispersion in the $y$-direction can be indeed explained in terms of the stochastic variability due to $\theta$ in the microscopic dynamics across the lanes, cf.~\eqref{eq:binary_y},~\eqref{eq:vd}. Again, we stress that, to our knowledge, this is one of the first times that theoretical speed diagrams due to lane changes are studied and explained by a mathematical model. Since we have not included any source of uncertainty in the $x$-dynamics~\eqref{eq:binary_x-v}-\eqref{eq:binary_x-w}, we cannot reproduce a similar estimate of the data dispersion in the theoretical $x$-speed diagram. However, for the sake of completeness, we mention that other works offer alternative explanations for the data dispersion in the $x$-direction which do not appeal to uncertain parameters nor UQ, see e.g.~\cite{fan2014NHM,puppo2016CMS,visconti2017MMS}.

Thanks to the results of Section~\ref{sec:asympt.distr}, we can also compute analytically the theoretical curves appearing in the $y$-speed diagram of Figure~\ref{fig:diagrams}a. In fact from $g^\infty_y(v_y;\,\theta)=\delta_{v_d(\theta)}(v_y)$ we have $u^\infty_y(\theta)=v_d(\theta)$, whence using~\eqref{eq:vd} with $\theta\sim\mathcal{U}(-1,\,1)$ we obtain
$$ \bar{u}^\infty_y=\frac{1}{2}\int_{-1}^{1}v_d(\theta)\,d\theta=\bar{v}_d, $$
which is the equation of the red line in Figure~\ref{fig:diagrams}a. Moreover, since $E^\infty_y(\theta)=v_d^2(\theta)$ we compute:
\begin{align*}
	\bar{E}^\infty_y(\theta) &= \frac{1}{2}\int_{-1}^{1}v_d^2(\theta)\,d\theta=\bar{v}_d^2+\frac{1}{3}\lambda^2P^2(\rho) \\
	\Var_\theta(E^\infty_y) &= \frac{1}{2}\int_{-1}^{1}v_d^4(\theta)\,d\theta-\left(\bar{E}^\infty_y\right)^2=
		4\left(\frac{1}{3}\bar{v}_d^2+\frac{1}{45}\lambda^2P^2(\rho)\right)\lambda^2P^2(\rho).
\end{align*}
In particular, for $\bar{v}_d=0$ (nearly the value used for the simulated diagram of Figure~\ref{fig:diagrams}a) this gives $I_y=\left(\frac{1}{3}+\frac{2}{\sqrt{45}}\right)\lambda^2P^2(\rho)$, thus the curves of the energy-based confidence interval in the $y$-speed diagram are
$$ \bar{u}^\infty_y\pm\sqrt{I_y}=\pm\sqrt{\frac{1}{3}+\frac{2}{\sqrt{45}}}\lambda P(\rho)
	\approx \pm 0.4(1-\rho) $$
for the expression~\eqref{eq:P} of $P(\rho)$ with the values of $\lambda$, $\delta$ in Figure~\ref{fig:diagrams}a.

\section{Conclusion}
\label{sec:conclusion}
In this work we have introduced a hybrid stochastic kinetic model of two-dimensional traffic dynamics, which takes into account speed changes both along and across road lanes in consequence of vehicle interactions and lane changes, respectively. Starting from a Boltzmann-type description based on suitable microscopic dynamics, we have derived a hybrid Fokker-Planck-Boltzmann equation in the quasi-invariant interaction limit assuming that lane changes, described by a linear collision operator, are much less frequent than speed variations along the lanes, described by a nonlinear Fokker-Planck operator. In particular, we have suggested that speed variations due to lane changes can be modelled at the microscopic level simply as a relaxation process towards a desired lateral speed, which however is not known deterministically. This introduces an intrinsic uncertainty in the kinetic equation, which proves to be essential for reproducing theoretically not only the average macroscopic trends observed in reality but also the scattering of the experimental data typical of the empirical fundamental diagrams of traffic.

Besides the result just mentioned, the main methodological contributions of this work are the following:
\begin{inparaenum}[($i$)]
\item we have proposed a formal asymptotic procedure to derive hybrid kinetic models including uncertain parameters, which can be applied to multivariate microscopic dynamics when some of them occur at a much lower rate than others. The advantage is that the most frequent dynamics turn out to be modelled by Fokker-Planck-type differential operators replacing the original Boltzmann-type collision operators, while the latter remain to model only the less frequent dynamics;
\item we have proposed a numerical study of the general hybrid stochastic kinetic equation by means of an extension of structure preserving methods existing in the literature to fully nonlinear Fokker-Planck equations combined with direct Monte Carlo methods, stratified sampling and UQ collocation methods to quantify the uncertainty intrinsic in the stochastic kinetic equation.
\end{inparaenum}

Further amplifications of the present work may include a systematic study of the numerical method for the hybrid kinetic equation with special attention to the case of possibly vanishing nonlinear diffusion in the Fokker-Planck operator. From the modelling point of view, the derivation of macroscopic traffic equations in a suitable hydrodynamic limit from the hybrid stochastic kinetic description is another completely open issue.

\section*{Acknowledgments}
The research that led to the present paper was partially supported by the MIUR-DAAD Joint Mobility Programme and by DFG HE5386/13-15.

A. T. is member of GNFM (Gruppo Nazionale per la Fisica Matematica) of INdAM (Istituto Nazionale di Alta Matematica), Italy.

G. V. and M. Z. are members of GNCS (Gruppo Nazionale per il Calcolo Scientifico) of INdAM, Italy.

M. Z. acknowledges support from ``Compagnia di San Paolo'', Turin, Italy.

\bibliographystyle{plain}
\bibliography{references}
\end{document}